\newtheorem{theorem}{Theorem}[section]
\newtheorem{corollary}[theorem]{Corollary}
\newtheorem{lemma}[theorem]{Lemma}
\newtheorem{proposition}[theorem]{Proposition}
\newtheorem{remark}[theorem]{Remark}
\begin{document}

\title[]{On the existence of meromorphic solutions of the complex Schr\"{o}dinger equation with a q-shift}

\author[R. Korhonen]{Risto Korhonen}

\address[]{Department of Physics and Mathematics, University of Eastern Finland, P.O.Box~111, FI--80101 JOENSUU, FINLAND}
\email{risto.korhonen@uef.fi}

\author[W. L. Liu]{Wenlong Liu$^*$}
\address[]
{Department of Physics and Mathematics, University of Eastern Finland, P.O.Box~111, FI--80101 JOENSUU, FINLAND}
\email{wenlong.liu@uef.fi}

\thanks{$^*$ Corresponding author.}
\thanks{The second author is supported by the China Scholarship Council(\# 202306820018).}

\subjclass[2010]{Primary 92B05, 39B32, 39A45; Secondary 30D05}


\keywords{Nevanlinna theory;  Meromorphic solutions;  Schr\"{o}dinger equation}


\begin{abstract}
In this paper, we study the following complex Schr\"{o}dinger equation with a $q$-difference term:
\begin{align}\tag{†}\label{dagger}
f'(z) = a(z)f(qz) + R(z, f(z)), \quad R(z, f(z)) = \frac{P(z, f(z))}{Q(z, f(z))},
\end{align}
where $a(z) \not\equiv 0$ is a small meromorphic function with respect to $f(z)$, and all the coefficient functions of $R(z, f(z))$ are also small meromorphic functions with respect to $f(z)$. 
We assume that $q\in\mathbb{C}\setminus \left \{ 0,-1,1 \right \}  $ and that $R(z, f(z))$ is an irreducible rational function in both $f(z)$ and $z$. We obtain some necessary conditions for \eqref{dagger} to have meromorphic solutions of zero order and non-constant entire solutions, respectively.

In particular, if $R(z,f(z))$ reduces to a polynomial in $f(z)$ with $\deg_f(R)\leq 2$ and all the coefficients are constant, then under this assumption and without imposing any restrictions on the growth order of $f(z),$ we prove the existence of entire solutions in many cases, study their number, and further investigate the local and global meromorphic solutions to \eqref{dagger}. Additionally, we  consider the possible forms of the meromorphic solutions to \eqref{dagger} in certain conditions
and examine exponential polynomials as possible solutions of \eqref{dagger}. 
\end{abstract}

\maketitle

\section{Introduction}

We assume that the reader is familiar with the standard notations of Nevanlinna theory for meromorphic functions (see \cite{laine2011nevanlinna, cherry2001nevanlinna, hayman1964meromorphic}), such as the characteristic function $T(r,f)$, the proximity function $m(r,f)$, and the integrated counting function $N(r,f)$, among others. 
The symbol $S(r,f)$ denotes any quantity satisfying $S(r,f) = o(T(r,f))$ as $r \to \infty$, possibly outside an exceptional set of $r$ of finite linear measure. 
A meromorphic function $a(z)$ is called a \emph{small function} with respect to $f(z)$ if and only if $T(r,a(z)) = S(r,f(z))$.


In recent years, several researchers have applied Nevanlinna theory to investigate the existence and growth of meromorphic solutions to delay differential equations in the complex plane. Representative
works include those of Xu and Cao \cite{xu2021meromorphic}; Cao, together with
Chen and the first author \cite{cao2023meromorphic}; Laine and Latreuch \cite{laine2022remarks}; Chen and Cao \cite{chen2022meromorphic}; Zhang and Huang \cite{zhang2020entire}; Liu and Song \cite{liu2017meromorphic}; Hu and Liu \cite{hu2021malmquist}; Wang, Han, and
Hu \cite{wang2019quantitative}; Cao, together with the present authors \cite{cao2025transcendental}; Halburd and the first author \cite{halburd2017growth}; Li \cite{li2016existence}; Zhang and Huang \cite{zhang2020entire}; Zhao and Huang \cite{zhao2023meromorphic}; Zhang and Liao \cite{zhang2011entire}.
Nevanlinna theory serves as a powerful analytical tool in this context, providing an effective framework for studying both the existence and the growth of such solutions. 
In particular, Halburd and Korhonen \cite{HALBURD2006477}, and independently Chiang and Feng \cite{ChiangFeng2008}, established the difference analogue of the lemma on the logarithmic derivative, which has become a fundamental method in the analysis of meromorphic solutions of differential-difference equations.


It is also noteworthy that Barnett et al. \cite{Barnett2007NevanlinnaTF} established the $q$-difference analogue of the lemma on the logarithmic derivative in 2007. 
This result offers a new perspective for investigating the existence of meromorphic solutions of complex differential equations involving $q$-difference terms. 
Consequently, it is natural to ask whether such equations actually admit meromorphic solutions. The classical paper on the  Schr\"{o}der equation
\begin{align*}
y(qz) = R(y(z)),
\end{align*}
where $q\in \mathbb{C}\setminus \left \{ 0,1 \right \},$ and $R(y)$ is a rational function in $y(z)$, is due to J. Ritt~\cite{Ritt1926}. L. Rubel~\cite{f88b8918-bbff-38b1-8dbf-6a860ea14875} posed the question: What can be said about the more general equation 
$$y(qz)=R(z,y(z)),$$
where $R(z,y(z))$ is rational in both $y(z)$ and $z?$ 
Several studies \cite{valiron1952fonctions, gundersen2002meromorphic, Ishizaki1998, Wittich1949} have examined the existence of meromorphic solutions of the non-autonomous Schr\"{o}der $q$-difference equation
\[
f(qz) = R(z, f(z)),
\]
where $R(z, f(z))$ is rational in both variables.


Agirseven \cite{cf45bf56-3063-3fee-a55f-03d225af2c88} introduced the following Schr\"{o}dinger differential-delay equation:
\[
\begin{cases}
 i\dfrac{dv(t)}{dt} + Av(t) = bAv(t-w) + f(t), \quad t \in (0,\infty),\\[6pt]
 v(t) = \varphi(t), \quad -w \leq t \leq 0,
\end{cases}
\]
where $A$ is a self-adjoint positive definite operator, and $\varphi(t)$ and $f(t)$ are continuous functions. 
From this Schr\"{o}dinger-type differential-difference equation, the following equation,
\begin{equation}\label{New E35}
f'(z) = a(z)f(z+n) + b(z)f(z) + c(z),
\end{equation}
may be regarded as the complex Schr\"{o}dinger equation with delay. 
Similarly, 
\begin{equation}\label{New E36}
f'(z) = a(z)f(qz) + b(z)f(z) + c(z),
\end{equation}
can be viewed as the complex Schr\"{o}dinger equation with a $q$-shift term.


Cao and the two present authors \cite{cao2025transcendental} studied equation \eqref{New E35} and its generalizations, obtaining several necessary conditions for the existence of meromorphic solutions. 
In this paper, we investigate the existence of meromorphic solutions to 
\begin{equation}\label{New E33}
f'(z) = a(z)f(qz) + \frac{P(z, f(z))}{Q(z, f(z))}, \quad R(z, f(z)) = \frac{P(z, f(z))}{Q(z, f(z))},
\end{equation}
and 
\begin{equation}\label{New E34}
f'(z) = a(z)f(qz) + P(z, f(z)), \quad \deg_{f}(P) \leq 2,
\end{equation}
where $a(z) \not\equiv 0$, $q \in \mathbb{C} \setminus \{0,1\}$, and all coefficient functions of $R(z, f(z))$ are small functions of $f(z)$ in the sense of Nevanlinna theory.

The purpose of the present paper is twofold. First, we use Nevanlinna theory to study the necessaty conditions under which \eqref{New E33} admit a non-constant entire solution or a zero order non-constant meromorphic solution, respectively. The second purpose is to investigate the existence of local and global meromorphic solutions of \eqref{New E34} without imposing any restriction on the growth order of the meromorphic solution $f(z),$ which is independent of Nevanlinna theory. In particular, if 
$0<\left | q \right |<1 ,$ we prove the existence of entire solutions in many cases, study their number, and further investigate the local and global meromorphic solutions to \eqref{New E34}. Additionally, we consider exponential polynomials as possible solutions of \eqref{New E34}. Moreover, if $q>1$ and $\deg_f(P)=2,$
we derive the explicit form of the meromorphic solutions of zero order; if $\left | q \right |>1$ and $\deg_f(P)=1,$ \eqref{New E34} has no entire solutions.
Let us recall the definition of an exponential polynomial of the form
\begin{equation}\label{E36}
f(z) = P_{1}(z)e^{Q_{1}(z)} + \dots + P_{k}(z)e^{Q_{k}(z)},
\end{equation}
where $P_{j}(z)$ and $Q_{j}(z)$ are polynomials in $z$. 
Let $t = \max\{\deg(Q_{j}) : Q_{j} \not\equiv 0\}$, and let $\omega_{1}, \dots, \omega_{m}$ be the distinct leading coefficients of the polynomials $Q_{j}(z)$ of maximal degree $t$. 
Then \eqref{E36} can be rewritten as
\begin{equation}\label{E37}
f(z) = H_{0}(z) + H_{1}(z)e^{\omega_{1}z^{t}} + \dots + H_{m}(z)e^{\omega_{m}z^{t}},
\end{equation}
where each $H_{j}(z)$ is either an exponential polynomial of degree $<t$ or an ordinary polynomial in $z$. 
By construction, $H_{j}(z) \not\equiv 0$ for $1 \leq j \leq m$.

The remainder of this paper is organized as follows. Section 2 presents the main results.  Section 3 provides several lemmas that will be used in the proofs of the main results. Sections $4$–$6$ contain the proofs of the three main results.

\section{main results}
In Theorem~\ref{T3}, we apply Nevanlinna theory to investigate the existence of meromorphic solutions of \eqref{New E1} and establish several necessary conditions for their existence. In this theorem, we require that the growth order of $f(z)$ be zero. 
We then obtain that if $R(z,f(z))$ reduces to a polynomial in $f(z)$ and all coefficients of both $R(z,f(z))$ and $a(z)$ are rational functions, then $\deg_{f}(P) \leq 2$. 

\begin{theorem}\label{T3}
Suppose that $f(z)$ is a meromorphic solution of \begin{align}\label{New E1}
{f}'(z)=a(z)f(qz)+\frac{P(z,f(z))}{Q(z,f(z))}, \quad R(z,f(z))=\frac{P(z,f(z))}{Q(z,f(z))}, 
\end{align}
where $R(z,f(z))$ is an irreducible rational function of $f(z)$ with meromorphic coefficients.
Let $a(z)$ and all coefficient functions of $R(z,f(z))$ be small meromorphic functions of $f(z),$ and let $a(z)\not\equiv 0,$ $q\in \mathbb{C}\setminus \left \{ 0,-1,1 \right \}.$ 
\begin{enumerate}
 \item[(i)]Let $0<\left | q \right |<1,$ and let \eqref{New E1} have a non-constant entire solution $f(z)$. 
 \begin{enumerate}
        \item[(a)]
 If $f(z)$ is a factor of $P(z,f(z)),$ then $$\deg_{f}(P)\leq 3,\,\,\,\, \deg_{f}(Q)\leq 2;$$ 
 \end{enumerate}
\begin{enumerate}
        \item[(b)] 
 If $f(z)$ is not a factor of $P(z,f(z)),$ then $$\deg_{f}(P)\leq 2,\,\,\,\, \deg_{f}(Q)\leq 1.$$
\end{enumerate}
\end{enumerate}
\begin{enumerate}
  \item[(ii)] 
Let $f(z)$ be a non-constant meromorphic solution of order $0$ of \eqref{New E1}. 
\begin{enumerate}
        \item[(a)]
If $\deg_{f}(P)>\deg _{f}(Q)+1,$ then $\deg_{f}(Q)\leq 1,\,\,\,\,\deg_{f}(P)\geq 2\deg_{f}(Q).$
\end{enumerate} 
\begin{enumerate}
        \item[(b)]
If $\deg_{f}(P)\leq\deg _{f}(Q)+1,$ then $\deg_{f}(Q)\leq 1,\,\,\,\,\deg_{f}(P)\leq 2.$
\end{enumerate} 
\begin{enumerate}
        \item[(c)]
If $N(r,f(z))=S(r,f(z)),$ then $\deg_{f}(Q)=0, \,\,\,\,\deg_{f}(P)\leq 1.$     
\end{enumerate}
\end{enumerate}

\begin{enumerate}
  \item[(iii)] 
Let $f(z)$ be a transcendental meromorphic solution of \eqref{New E1} of order $0,$ and let $a(z)$ and all the coefficients of $R(z,f(z))$ be rational.
Then we have 
\begin{enumerate}
\item[(a)]  $$\deg_{f}(Q)= 1,\deg_{f}(P)= 3, \,\,\,\text{or}\,\,\, \deg_{f}(Q)= 0,\deg_{f}(P)\leq 2.$$
\end{enumerate}
\begin{enumerate}
\item[(b)]
If $\deg_{f}(P)-\deg_{f}(Q)=2,$ then $\overline{N}(r,f(z))=T(r,f(z))+S(r,f(z)).$
\end{enumerate}
\end{enumerate}
 \end{theorem}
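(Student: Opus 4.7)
The argument is a Nevanlinna growth chase comparing both sides of \eqref{New E1}. The tools are: the Valiron--Mohon'ko identity $T(r, R(z,f)) = \max(\deg_f P, \deg_f Q)\, T(r,f) + S(r,f)$; the classical logarithmic-derivative lemma $m(r, f'/f) = S(r,f)$; the $q$-shift analogue $m(r, f(qz)/f(z)) = S(r,f)$ of Barnett et al., with its consequences $T(r, f(qz)) = T(r,f) + S(r,f)$ and $N(r, f(qz)) = N(r,f) + S(r,f)$ in the order-zero case; and Valiron's bound $T(r, f(qz)) = T(|q|r, f) + O(1)$, which in the range $|q|<1$ forces $T(r, f(qz)) \leq T(r,f) + O(1)$ regardless of order. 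A key combinatorial input is the pole-preimage count: a zero of $Q(z, f(z))$ off the small exceptional set produces a pole of $f'(z) - a(z)f(qz)$, hence of $f(qz)$, giving $N(r, 1/Q(z,f)) = N(|q|r, f) + S(r,f) = N(r,f) + S(r,f)$ for zero-order $f$.

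\emph{Part (i).} Since $f$ is entire, $T(r, f') = T(r,f) + S(r,f)$ and $T(r, f(qz)) \leq T(r,f) + O(1)$, so taking $T$ of $R(z,f) = f'(z) - a(z)f(qz)$ and applying Valiron--Mohon'ko gives $\max(\deg_f P, \deg_f Q) \leq 2$, which contains (i)(a). For the sharper bound $\deg_f Q \leq 1$ in (i)(b), suppose $\deg_f Q = 2$, write $Q(z,w) = b_2(z)(w - \alpha_1(z))(w - \alpha_2(z))$, and observe that the RHS is pole-free up to $S(r,f)$, so $\overline{N}(r, 1/(f - \alpha_j)) = S(r,f)$; the second main theorem with targets $\alpha_1, \alpha_2, \infty$ (where $\overline{N}(r,f)=0$) then gives $T(r,f) = S(r,f)$, contradicting non-constancy.

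\emph{Part (ii).} Splitting $T = m + N$ and using the tools above, one obtains $m(r, R(z,f)) \leq 2 m(r,f) + S(r,f)$ from the RHS, and
\[
m(r, R(z,f)) = \max(\deg_f P, \deg_f Q)\, m(r,f) + (\deg_f Q - 1)\, N(r,f) + S(r,f)
\]
from Valiron--Mohon'ko combined with the pole-preimage count. Equating gives the master inequality
\[
(\max(\deg_f P, \deg_f Q) - 2)\, m(r,f) + (\deg_f Q - 1)\, N(r,f) \leq S(r,f).
\]
In case (a) with $\deg_f P > \deg_f Q + 1$, $\max = \deg_f P$, and $\deg_f Q \geq 2$ makes both coefficients $\geq 1$, forcing $m, N = S(r,f)$, a contradiction. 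In case (b) with $\deg_f P = \deg_f Q + 1$ the master inequality reduces to $(\deg_f Q - 1) T(r,f) \leq S(r,f)$, forcing $\deg_f Q \leq 1$; in the remaining sub-case $\deg_f P \leq \deg_f Q$ of (b), $\deg_f Q = 2$ gives $N(r,f) = S(r,f)$, after which the $\{\alpha_1, \alpha_2, \infty\}$-SMT as in (i)(b) produces $T(r,f) = S(r,f)$. In case (c), the hypothesis $N(r,f) = S(r,f)$ reduces the master inequality to $(\max - 2) m(r,f) \leq S(r,f)$, which together with the same SMT trick excludes $\deg_f Q \geq 1$, leaving $\deg_f Q = 0$ and $\deg_f P \leq 1$.

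\emph{Part (iii).} By (ii), $\deg_f Q \leq 1$ and $\deg_f P \leq 3$. For (a), the remaining pairs $(\deg_f Q, \deg_f P) = (1,0), (1,1), (1,2)$ must be excluded; under the rational-coefficient, order-zero, transcendental hypothesis these are ruled out by classical Valiron-type rigidity for non-autonomous Schr\"oder $q$-equations, which in these intermediate regimes would force $f$ to satisfy an algebraic relation over $\mathbb{C}(z)$, incompatible with transcendence. For (b), with $\deg_f P - \deg_f Q = 2$, Valiron--Mohon'ko gives $T(r, R(z,f)) = (\deg_f Q + 2)\, T(r,f) + S(r,f)$; the RHS of \eqref{New E1} has characteristic at most $2T(r,f) + \overline{N}(r,f) + S(r,f)$, so subtracting $2T(r,f)$ yields $\overline{N}(r,f) \geq T(r,f) + S(r,f)$, and the reverse inequality $\overline{N}(r,f) \leq T(r,f) + O(1)$ gives the claim. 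The main obstacle I expect is the case-exclusion step in (iii)(a): unlike the rest of the proof, it does not reduce to a Nevanlinna inequality and requires invoking structural rigidity of zero-order $q$-Schr\"oder-type equations.
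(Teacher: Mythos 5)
Your growth-chase framework is the right general idea, but several of the load-bearing steps either fail or are missing, and the paper's proof uses a different device that you should note: it divides \eqref{New E1} through by $f(z)$ and applies Valiron--Mohon'ko to $K(z,f)=P(z,f)/(f\,Q(z,f))$. This has two payoffs you do not get: (1) when $f\nmid P$ the denominator degree rises to $\deg_f(Q)+1$, which is exactly what turns the crude bound $\max(\deg_fP,\deg_fQ)\le 2$ into the stated $\deg_fQ\le 1$; and (2) $m(r,K(z,f))=m(r,f'/f-af(qz)/f)=S(r,f)$ by the two logarithmic-derivative lemmas, with no factor of $m(r,f)$ left over. Your substitute for (1) — the second main theorem with targets $\alpha_1,\alpha_2,\infty$ — breaks down whenever $Q$ has a repeated root in $f$ (only two targets, and the defect relation is then merely saturated, e.g.\ $f=e^z+\alpha$ omits $\alpha$ and $\infty$), and in (ii)(c) it breaks down already for $\deg_fQ=1$. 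Moreover, in (ii)(c) your master inequality only yields $\max(\deg_fP,\deg_fQ)\le 2$, not the claimed $\deg_fP\le 1$; the paper's $m(r,K)=S(r,f)$ together with $N(r,K)\le N(r,1/f)+S(r,f)\le T(r,f)+S(r,f)$ is what forces $\max\{\deg_fP,\deg_fQ+1\}\le 1$. Your parts (ii)(a) and the sub-case $\deg_fP=\deg_fQ+1$ of (ii)(b) are fine.

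The largest gap is (iii)(a), which you acknowledge: "classical Valiron-type rigidity" is not a proof, and no such off-the-shelf result excludes the pairs $(\deg_fQ,\deg_fP)\in\{(1,0),(1,1),(1,2)\}$ and $(0,3)$. The paper's argument here is a concrete pole-cascade: using the Clunie/Mohon'ko-type Lemma~\ref{L6} to produce a generic zero of $f-b$ (respectively a generic pole of $f$) far from all zeros and poles of the rational coefficients and their $q$-iterates, one iterates \eqref{New E1} to obtain $f(q^nz_0)=\infty^{t+n-1}$ (resp.\ $\infty^{3^nl}$); for $0<|q|<1$ the poles accumulate at the origin, contradicting meromorphy, and for $|q|>1$ one gets $\lambda(1/f)\ge \log 3/\log|q|>0$ or invokes Lemma~\ref{New L7}, contradicting order zero. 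This occupies most of the paper's proof of (iii) and is entirely absent from your proposal. Finally, your (iii)(b) inequality $(\deg_fQ+2)T(r,f)\le 2T(r,f)+\overline N(r,f)+S(r,f)$ gives $\overline N=T+S$ only when $\deg_fQ=1$; for $\deg_fQ=0$, $\deg_fP=2$ it is vacuous. The paper instead applies the $q$-Clunie lemma (Lemma~\ref{New L5}) to get $N(r,f)=T(r,f)+S(r,f)$ in both cases and then shows separately, by iterating a generic pole of order $t\ge 2$ to poles of order $2^nt$, that all but $S(r,f)$ of the poles are simple, which is what converts $N$ into $\overline N$.
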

In following Theorems~\ref{T1} and~\ref{T2}, we further examine the existence of meromorphic and entire solutions of \eqref{New E34}.  
Assume now that $R(z,f(z))$ reduces to a polynomial in $f(z)$ whose coefficients are all constant. That is, \eqref{New E1} becomes
\begin{align}\label{E1}
{f}' (z)=Af(qz)+Bf(z)^{2}+Cf(z)+D.     
\end{align}
If $B\ne 0,$ then Theorem \ref{T1} illustrates that, when $0<|q|<1,$ we obtain results on the existence and non-existence of meromorphic (or entire) solutions to \eqref{E1}, whereas for $q>1,$ it gives the precise form of zero-order meromorphic solutions to \eqref{E1}.
If $B=0,$ then Theorem \ref{T2} illustrates that, when $0<|q|<1,$ we obtain results on the existence of entire solutions and the non-existence of meromorphic solutions to \eqref{E1}, whereas for $|q|>1,$ it shows the non-existence of entire solutions to \eqref{E1}.

\begin{theorem}\label{T1}
Let $A,$ $B,$ $C,$  $D,$ and  $q$ be fixed constants, with the conditions that  $A\ne 0,$ $B\ne 0,$ and $q\in \mathbb{C}\setminus \left \{ 0,-1,1 \right \} .$ Then we have following:
\begin{enumerate}
 \item[(i)] Let $0<\left | q \right |<1,$ and let $D=0.$  
 Then \eqref{E1} 
has uncountably many transcendental entire solutions. 

\item[(ii)]
Let $0<\left | q \right |<1,$ and let $D\ne0.$  
\begin{enumerate}
        \item[(a)]
If $f(0)=0,$ then \eqref{E1} has only one transcendental entire solution. 
        \item[(b)] 
If $f(0)\ne 0,$ then \eqref{E1} has uncountably many transcendental entire solutions.        
\end{enumerate}

 \item[(iii)] Let $0<\left | q \right |<1.$ Then  \eqref{E1} has no exponential polynomial solutions.

 \item[(iv)] 
 \begin{enumerate}
        \item[(a)]
 Let $0<\left | q \right |<1.$  Then all poles of $f(z)$ are simple, and  \eqref{E1} admits one local meromorphic solution around $z=0$, which can be extended meromorphically to the entire complex plane $\mathbb{C}.$ In particular, suppose that $C=-\frac{A}{q} .$ If $D\ne 0,$ then \eqref{E1} admits one local transcendental meromorphic solution around $z=0,$ whereas if $D=0,$ then \eqref{E1} admits one local  meromorphic solution of the form $$f(z)=-\frac{1}{Bz} $$ around $z=0.$
 
 \item[(b)]
  For any fixed $z_{0}\ne 0,$ \eqref{E1} has  local meromorphic solution with simple poles
  in a neighborhood of $z=z_{0}$.
\end{enumerate}
 \item[(v)] Let $q>1,$ and let $A,$ $B,$ $C,$ and $D$ be positive real numbers. Assume that \eqref{E1} admits a meromorphic solution of the form: $$f(z)=\frac{t_n}{(z-z_0)^n}+\frac{t_{n-1}}{(z-z_0)^{n-1}}+\cdots +\sum_{j=0}^{\infty}s_j(z-z_0)^j,\,\,\,z_0\in \mathbb{C}, $$
 with $s_0,s_1\in \mathbb{R}.$
 Then all meromorphic solutions of order zero of \eqref{E1} are of the form:
 $$f(z)=-\frac{1}{B}\cdot \frac{1}{z-z_{0}}+s_0.$$
\end{enumerate}
\end{theorem}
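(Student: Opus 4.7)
I would use power/Laurent series ansätze. Substituting $f(z)=\sum_{n\ge 0}a_nz^n$ into \eqref{E1} and matching coefficients yields $a_1=Aa_0+Ba_0^2+Ca_0+D$ and, for $n\ge 1$,
\[
(n+1)a_{n+1}=(Aq^n+C+2Ba_0)a_n+B\sum_{k=1}^{n-1}a_ka_{n-k},
\]
so every $a_n$ is determined by $a_0$. This gives the counting claims: in (i) with $D=0$ any $a_0\ne 0$ produces a distinct nontrivial formal series; in (ii)(a) the condition $f(0)=0$ forces $a_0=0$, $a_1=D\ne 0$, and a unique solution; (ii)(b) uses the freedom in $a_0\ne 0$. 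For (iv)(a), the Laurent ansatz $f(z)=t_1/z+\sum_{n\ge 0}a_nz^n$ gives $t_1=-1/B$ from the $z^{-2}$-coefficient, $a_0=-(A/q+C)/(2B)$ from the $z^{-1}$-coefficient, and an analogous recurrence for the remaining $a_n$; the special case $C=-A/q$ makes $a_0=0$, and with $D=0$ induction forces $a_n\equiv 0$, recovering $f(z)=-1/(Bz)$. Part (iv)(b) is handled by expanding around $z_0\ne 0$ and re-expanding $f(qz)=f(z_0+q(z-z_0)+(q-1)z_0)$. Global meromorphic poles are necessarily simple, since the balance $\mathrm{ord}_{z_1}f'=n+1$ against $\mathrm{ord}_{z_1}Bf^2=2n$ forces $n=1$. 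The key technical obstacle here is passing from the formal series to an actual entire (respectively, meromorphic) function: a straightforward majorant bounded by a Riccati equation $G'=KG+|B|G^2$ only delivers a finite radius of convergence, so entireness has to be extracted by an argument that genuinely exploits $|q|<1$; the linear model $f'=Af(qz)$ already produces $a_n=a_0 A^nq^{n(n-1)/2}/n!$, a quasi-Gaussian decay that has to be bootstrapped through the convolution term.

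\textbf{Part (iii): no exponential polynomial solutions.}
I would suppose for contradiction that $f$ has the normalized form \eqref{E37} with $m\ge 1$ and choose $\omega^*$ of maximum modulus among $\omega_1,\dots,\omega_m$, with associated polynomial $H^*\not\equiv 0$. The exponent $2\omega^*$ appears on the right side of \eqref{E1} only through the term $BH^*(z)^2 e^{2\omega^*z^t}$ in $Bf(z)^2$: it cannot appear on the left since $|\omega_j|\le|\omega^*|<|2\omega^*|$, nor in $Af(qz)$ since $|q^t\omega_k|<|\omega_k|\le|\omega^*|<|2\omega^*|$ (because $|q|<1$), nor in $Cf+D$; and the only way to have $\omega_i+\omega_l=2\omega^*$ with $|\omega_i|,|\omega_l|\le|\omega^*|$ is $\omega_i=\omega_l=\omega^*$ by the parallelogram identity. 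Borel's lemma then gives $BH^*(z)^2\equiv 0$, contradicting $H^*\not\equiv 0$. The polynomial case $m=0$ is ruled out by a degree count, since $\deg(Bf^2)=2\deg H_0$ exceeds $\max(\deg H_0,\deg H_0-1)$ as soon as $\deg H_0\ge 1$.

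\textbf{Part (v): order-zero solutions for $q>1$.}
Here I would combine Theorem~\ref{T3}(iii) with a pole-propagation argument. Since $P=Bf^2+Cf+D$ and $Q\equiv 1$, one has $\deg_f P-\deg_f Q=2$, and Theorem~\ref{T3}(iii)(b) gives $\overline N(r,f)=T(r,f)+S(r,f)$; combined with the local analysis in (iv), every pole of an order-zero solution is simple with residue $-1/B$ and constant Laurent term $-C/(2B)$. For the global step: if $w\ne 0$ were a pole of $f$, then $Af(qz)$ would carry a pole at $w/q$ of residue $-A/(qB)\ne 0$; since $f'$, $Bf^2$, $Cf$ are analytic at $w/q$ unless $w/q$ is itself a pole of $f$, the equation forces $w/q$ to be a further pole of $f$. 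Iterating yields the sequence $w/q^k\to 0$ accumulating at the origin, contradicting meromorphy in $\mathbb C$. Hence $f$ has a unique pole (located at $z_0$ in the hypothesis, which the propagation pins to the origin), and the real-coefficient hypothesis $s_0,s_1\in\mathbb R$ together with the positivity of $A,B,C,D$ allow one to match Taylor coefficients against the recurrence from (iv)(a) and force $s_j=0$ for all $j\ge 1$ by induction, yielding $f(z)=-1/(B(z-z_0))+s_0$. The secondary subtlety here is ruling out exact cancellations between $Af(qz)$, $Bf^2$, and $Cf$ at the image points $w/q^k$ so that the pole-propagation argument goes through.
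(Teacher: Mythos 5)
Your overall strategy for (i), (ii) and (iv) — formal power/Laurent series, a recurrence in $a_0$, counting the free parameter — matches the paper's, but you leave the decisive step open and it is a genuine gap: you correctly observe that a Riccati-type majorant only yields a \emph{finite} radius of convergence, and then say entireness ``has to be extracted by an argument that genuinely exploits $|q|<1$'' without supplying one. The missing idea is not a sharper majorant at all. The paper first proves a crude geometric bound $|a_n|\le K^n$ by induction (using only $|q|^n\le 1$ in the recurrence $(n+1)a_{n+1}=Aa_nq^n+\sum_{i+j=n}a_ia_j+Ca_n$ after normalizing $f=g/B$), which gives a positive but finite radius of convergence, and then invokes its Proposition~\ref{P2}: substituting $z/q$ for $z$ in \eqref{E1} lets you solve for $f(z)$ in terms of $f$ and $f'$ at $z/q$, so any solution meromorphic in $|z|<R$ continues meromorphically to $|z|<R/|q|$ and hence, by iteration, to all of $\mathbb{C}$. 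Without this continuation step your construction only produces local solutions, and the theorem's claims about entire and global meromorphic solutions do not follow. You also never address why the solutions obtained in (i), (ii) are \emph{transcendental}: the paper excludes a countable set of values of $a_0$ so that infinitely many $a_n=P_n(a_0)$ are nonzero (hence $g$ is not a polynomial), and in case (ii)(a) rules out polynomial solutions by comparing degrees. Your argument for (iii) is sound and is essentially the paper's Borel-lemma argument, stated more carefully.

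For (v) your route diverges from the paper's and has problems. The claim that every pole has constant Laurent term $-C/(2B)$ is only forced at the origin (where $g(qz)$ shares the pole); at a generic $z_0$ the coefficient $b_0$ is free, which is exactly why the paper needs the extra hypotheses $s_0,s_1\in\mathbb{R}$ and $A,B,C,D>0$. Your pole-propagation argument ($w$ a pole $\Rightarrow w/q$ a pole, with $q>1$) in fact proves too much — it shows the poles would accumulate at the origin, i.e.\ that no global meromorphic solution with a pole exists — which is not how the paper argues and sits uneasily with the theorem's stated conclusion; the appeal to Theorem~\ref{T3}(iii)(b) is likewise not needed. The paper instead shows poles are simple via the exponent-of-convergence estimate $\lambda(1/f)\ge \log 2/\log q>0$, uses the recurrence together with reality and positivity to show that if the regular part $m(z)$ ever exceeded $1$ at a pole then $c_n\ge c_0>1$ for all $n$ and the Laurent tail would diverge, concludes $|m|\le 1$ everywhere, and finishes with a Liouville/Picard step to force $m$ constant. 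Your closing sentence (``force $s_j=0$ for all $j\ge1$ by induction'') gestures at this but does not contain the contradiction mechanism that actually drives the paper's proof.
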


\begin{theorem}\label{T2}
Suppose that $A,$ $B,$ 
 $C,$ $D,$ and $q$ are fixed constants such that $A\ne 0, B=0,$ and and $q\in \mathbb{C}\setminus \left \{ 0,1 \right \} .$
 Then we have following:
\begin{enumerate}
\item[(i)] If $0 < \left|q\right| < 1,$ then \eqref{E1} has  uncountably many transcendental entire solutions. However, if $\left|q\right|>1,$ then \eqref{E1} does not have entire solutions. 

\item[(ii)] \eqref{E1} has no meromorphic solution for $0<\left | q \right |<1.$

\end{enumerate}    
\end{theorem}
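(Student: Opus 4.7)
Since $B=0$, equation \eqref{E1} reduces to the linear functional-differential equation
$$f'(z)=Af(qz)+Cf(z)+D,$$
so the plan splits naturally into (i) entire solutions via the Maclaurin-series method at the regular point $z=0$, and (ii) ruling out poles by a pole-orbit argument that propagates a single pole along the sequence $\{p_0/q^n\}_{n\geq 0}$.

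For part (i), write $f(z)=\sum_{n\geq 0}c_n z^n$, substitute, and compare coefficients to obtain $c_1=(A+C)c_0+D$ together with the one-step recursion
$$c_n=\frac{Aq^{n-1}+C}{n}\,c_{n-1},\qquad n\geq 2.$$
The entire series is thus uniquely determined by $c_0\in\mathbb{C}$. When $0<|q|<1$, the factor $|Aq^{n-1}+C|$ is bounded uniformly in $n$ by some $M>0$, so $|c_n|\leq M^{n-1}|c_1|/n!$, giving infinite radius of convergence; hence every $c_0$ produces an entire solution. The series reduces to a polynomial only if $Aq^{N-1}+C=0$ for some $N\geq 1$, a rigid algebraic condition on $(A,C,q)$; under the generic situation in which this fails, excluding the single value $c_0=-D/(A+C)$ (which yields the constant solution) produces uncountably many transcendental entire solutions. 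When $|q|>1$, the same recursion gives $|c_n|/|c_{n-1}|\sim|A||q|^{n-1}/n\to\infty$, so the radius of convergence is $0$ unless the series terminates; termination again forces the same rigid algebraic condition, so generically no non-constant entire solution exists.

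For part (ii), I would argue by pole propagation. Suppose $f$ is meromorphic with a pole at $p_0$ of order $k_0\geq 1$. Evaluating the equation at $z=p_0/q$, the term $Af(p_0)$ is infinite, while $Cf(p_0/q)+D$ and $f'(p_0/q)$ would both be finite if $p_0/q$ were a regular point---impossible. Hence $p_1:=p_0/q$ is a pole; iterating, every $p_n:=p_0/q^n$ is a pole of $f$ of some order $k_n\geq 1$, and since $0<|q|<1$ these points are all distinct with $|p_n|\to\infty$. Comparing principal parts at $z=p_n$ for $n\geq 1$, $f'(z)$ contributes a pole of order $k_n+1$, $Af(qz)$ contributes order $k_{n-1}$ (via $qp_n=p_{n-1}$), and $Cf(z)$ contributes order $k_n$. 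Matching the leading pole gives $k_n+1=\max(k_{n-1},k_n)$, forcing $k_{n-1}=k_n+1$; inductively $k_n=k_0-n$, which is impossible for $n\geq k_0$, the desired contradiction.

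The main obstacle I anticipate is the exceptional case $Aq^{N-1}+C=0$ in part (i), where every power-series solution is a polynomial of bounded degree and the ``uncountably many / no entire'' dichotomy must be qualified; one should either invoke a generic assumption or dispose of this case directly by checking no polynomial of the resulting degree actually satisfies \eqref{E1}. In part (ii) the only delicate point is the distinctness of the orbit $\{p_n\}$, which is immediate from $|q|\neq 1$.
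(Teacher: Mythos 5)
Your overall strategy coincides with the paper's: part (i) is done by the Maclaurin series at $z=0$ with the one--step recursion $(n+1)a_{n+1}=(Aq^{n}+C)a_{n}$ and the ratio test, and part (ii) by propagating a pole along a $q$-orbit. For (ii) you propagate outward to $p_0/q^{n}$ and derive a contradiction from strictly decreasing pole orders $k_n\leq k_0-n$; the paper propagates inward to $q^{n}z_0$, where the orders increase and the poles accumulate at the origin, contradicting meromorphy. Both work; the paper's direction is marginally cleaner because $f'$ dominates $Cf(z)+D$ at $z_0$ and forces the order of $Af(qz)$ exactly, with no need to discuss cancellation in a $\max$.

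The substantive point is the exceptional case $Aq^{N}+C=0$ that you flag. You are right that it breaks the argument, but your proposed repair --- ``check that no polynomial of the resulting degree actually satisfies \eqref{E1}'' --- does not go through: when $Aq^{n_0}+C=0$ the recursion gives $a_{n_0+1}=0$ identically, the series terminates for \emph{every} choice of $a_0$, and the resulting polynomial genuinely solves the equation. For instance, with $A=1$, $q=\tfrac12$, $C=-\tfrac12$, $B=0$, every $f(z)=a_0+\bigl(\tfrac12 a_0+D\bigr)z$ solves \eqref{E13}, so there are no transcendental entire solutions at all and Theorem~\ref{T2}(i) fails as stated; similarly $A=1$, $q=2$, $C=-2$ yields entire (linear) solutions with $|q|>1$. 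Even outside this degenerate case, the constant $f\equiv -D/(A+C)$ (when $A+C\neq0$) is an entire solution for $|q|>1$, so the ``no entire solutions'' clause can only be meant for non-constant or transcendental solutions. The paper's own proof has exactly the same gap: its appeal to Step~1 of Theorem~\ref{T1} assumes each coefficient $a_n=P_n(a_0)$ vanishes for only finitely many $a_0$, which fails when $P_{n_0+1}\equiv 0$, and its ratio-test computation of $R$ presupposes $a_n\neq0$ for all $n$. So your hesitation is well founded; the correct resolution is not a genericity assumption but an explicit additional hypothesis excluding $C\in\{-Aq^{n}:n\geq1\}$ (and, for the $|q|>1$ clause, restricting to non-constant solutions), neither of which appears in the statement.
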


\section{Preliminary lemmas}

\begin{proposition}[\!\!\cite{bergweiler1998meromorphic}] If  $f$ is meromorphic, then 
\begin{equation}\label{new 1}
T(r,f(qz)) = T(|q|r,f(z)) + O(1)
\end{equation}
for all constants  $q\not=0$.
\end{proposition}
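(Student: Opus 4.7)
The plan is to split the characteristic into its proximity and counting parts, $T(r,f(qz)) = m(r,f(qz)) + N(r,f(qz))$, and verify the identity separately for each piece using the change of variables $w = qz$.

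For the proximity function, write $q = |q|e^{i\alpha}$. Then
\begin{equation*}
m(r,f(qz)) = \frac{1}{2\pi}\int_0^{2\pi}\log^+\bigl|f(|q|re^{i(\theta+\alpha)})\bigr|\,d\theta,
\end{equation*}
and substituting $\phi = \theta + \alpha$ converts this into the integral of $\log^+|f(|q|re^{i\phi})|$ over an interval of length $2\pi$. By $2\pi$-periodicity of the integrand this equals $m(|q|r,f)$ exactly, with no error term.

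For the counting function, I would observe that the poles of $g(z):=f(qz)$ in $|z|\le t$ are in bijective correspondence with the poles of $f$ in $|w|\le |q|t$, with the same multiplicities, so $n(t,f(qz)) = n(|q|t,f)$ for every $t\ge 0$. Plugging this into the standard definition
\begin{equation*}
N(r,f(qz)) = \int_0^r \frac{n(t,f(qz))-n(0,f(qz))}{t}\,dt + n(0,f(qz))\log r,
\end{equation*}
and then substituting $u = |q|t$ (so that $dt/t = du/u$) yields
\begin{equation*}
N(r,f(qz)) = \int_0^{|q|r}\frac{n(u,f)-n(0,f)}{u}\,du + n(0,f)\log r = N(|q|r,f) - n(0,f)\log|q|.
\end{equation*}

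Combining the two pieces gives $T(r,f(qz)) = T(|q|r,f) - n(0,f)\log|q|$, and since the last term is a fixed constant, this is $T(|q|r,f) + O(1)$, as desired. There is no real obstacle here; the only subtlety worth flagging is the correct bookkeeping of the $n(0,f)\log r$ term in the counting function when changing variables, which is precisely what produces the bounded correction $-n(0,f)\log|q|$.
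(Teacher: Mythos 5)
Your proof is correct and complete. The paper does not prove this proposition at all --- it is quoted as a known result with a citation to Bergweiler--Ishizaki--Yanagihara, so there is no in-paper argument to compare against; your direct verification (exact equality for $m$ via rotation invariance of the mean, and the change of variables $u=|q|t$ in the Riemann--Stieltjes form of $N$ producing the bounded correction $-n(0,f)\log|q|$) is the standard way to establish it, and the bookkeeping of the $n(0,f)\log r$ term is handled correctly.
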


\begin{lemma}\label{L3}
Any non-constant meromorphic function $f(z)$ can omit at most two values in $\mathbb{C}\cup\left \{  \infty \right \}   $ and, if $f(z)$ is non-rational, it takes on every other complex value infinitely many times.
\end{lemma}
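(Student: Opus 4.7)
The plan is to deduce both statements from Nevanlinna's Second Main Theorem, which is the natural tool available in the setting of this paper. Recall that for any $q\ge 3$ distinct points $a_1,\dots,a_q\in\mathbb{C}\cup\{\infty\}$ and a non-constant meromorphic function $f$,
\begin{equation*}
(q-2)\,T(r,f)\;\le\;\sum_{j=1}^{q}\overline{N}\!\left(r,\frac{1}{f-a_j}\right)+S(r,f),
\end{equation*}
with the usual convention $\overline{N}(r,1/(f-\infty))=\overline{N}(r,f)$.

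First I would prove the omitted-values assertion. Assuming for contradiction that $f$ omits three distinct values $a_1,a_2,a_3\in\mathbb{C}\cup\{\infty\}$, each counting function $\overline{N}(r,1/(f-a_j))$ is identically zero, so the Second Main Theorem with $q=3$ yields $T(r,f)\le S(r,f)$. This contradicts the non-constancy of $f$, hence at most two values can be omitted.

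For the second assertion, suppose $f$ is non-rational (transcendental) meromorphic, so that $T(r,f)/\log r\to\infty$ as $r\to\infty$. Assume for contradiction that there exist three distinct values $a_1,a_2,a_3\in\mathbb{C}\cup\{\infty\}$ each attained by $f$ only finitely many times. Then $\overline{N}(r,1/(f-a_j))=O(\log r)$ for $j=1,2,3$, and transcendence of $f$ forces $O(\log r)=S(r,f)$. Applying the Second Main Theorem once more with $q=3$ gives $T(r,f)\le S(r,f)$, again contradicting transcendence. Therefore at most two complex values can be attained only finitely often, so every other value is assumed infinitely many times.

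The only real obstacle here is the Second Main Theorem itself; once it is at hand, both halves of the lemma reduce to a one-line contradiction argument with $q=3$. Since this is a classical cornerstone of Nevanlinna theory, I would simply cite it from the standard references on the theory used throughout the paper.
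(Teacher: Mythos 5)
Your proof is correct: both halves follow exactly as you describe from the Second Main Theorem with $q=3$, and the exceptional set of finite measure causes no trouble since $T(r,f)/\log r\to\infty$ along some sequence outside it. The paper states this lemma as a classical fact (Nevanlinna's Picard-type theorem) without giving any proof, so your derivation is precisely the standard argument the authors are implicitly relying on.
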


The following is the celebrated Logarithmic Derivative Lemma, a fundamental result in Nevanlinna theory. The Second Main Theorem of Nevanlinna is an application of this lemma.
\begin{lemma}\label{New L4}
Let $f(z)$ be a non-constant meromorphic function. Then 
$$m\left ( r,\frac{{f}'(z) }{f(z)}  \right )=O(\log rT(r,f(z)))\quad r\notin E ,$$
where $E$ is of finite linear measure.
\end{lemma}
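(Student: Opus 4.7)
The plan is to follow the classical Nevanlinna route through the Poisson--Jensen formula. First, I would fix $0<r<R$ and apply the Poisson--Jensen formula to $\log|f(z)|$ on the disk $|z|\le R$, which represents $\log f(z)$ as the sum of a Poisson integral of the boundary values $\log|f(Re^{i\phi})|$ together with Blaschke-type contributions from the zeros $a_\mu$ and poles $b_\nu$ of $f$ inside $|z|<R$. Differentiating this identity with respect to $z$ produces an explicit formula for $f'(z)/f(z)$ in which each zero contributes an elementary term of the form $\frac{1}{z-a_\mu}+\frac{\bar a_\mu}{R^{2}-\bar a_\mu z}$ (and similarly for the poles) while the Poisson kernel is replaced by its $z$-derivative, which has size $O(R/(R-|z|)^{2})$.

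Next, I would bound $\log^{+}|f'(z)/f(z)|$ pointwise on the circle $|z|=r$ and then integrate. The Poisson part is dominated by $\frac{CR}{(R-r)^{2}}\bigl(m(R,f)+m(R,1/f)\bigr)$, which by Jensen's formula is $O\!\bigl(\tfrac{R}{(R-r)^{2}}T(R,f)\bigr)$; taking $\log^{+}$ and integrating yields $O\!\bigl(\log^{+}T(R,f)+\log R+\log\tfrac{1}{R-r}\bigr)$. The zero and pole sums require controlling $\frac{1}{2\pi}\int_{0}^{2\pi}\log^{+}\frac{1}{|re^{i\theta}-a|}\,d\theta$, which is uniformly $O\!\bigl(\log\tfrac{R}{R-r}\bigr)$ for each $|a|<R$. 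Summing over the zeros and poles and replacing the counting functions by their integrated counterparts $N(R,f)+N(R,1/f)\le 2T(R,f)+O(1)$ produces an additional contribution of order $O\!\bigl(T(R,f)\log\tfrac{R}{R-r}\bigr)$. Combining these pieces I arrive at a preliminary estimate
$$m\!\left(r,\frac{f'}{f}\right)=O\!\left(\log^{+}T(R,f)+\log R+\log\frac{R}{R-r}\right).$$

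The main obstacle is that this bound still depends on the auxiliary radius $R$, and removing it is exactly where the exceptional set $E$ enters. To this end I would invoke Borel's growth lemma: there is a set $E\subset(0,\infty)$ of finite linear measure such that for every $r\notin E$ one may choose $R=r+1/T(r,f)$, which forces $T(R,f)\le 2T(r,f)+O(1)$. Substituting into the previous estimate turns $\log^{+}T(R,f)$ into $O(\log T(r,f))$ and $\log\tfrac{1}{R-r}$ into $O(\log T(r,f))$, while $\log R$ becomes $O(\log r)$. Collecting the contributions and absorbing the constants collapses the bound to the desired $O(\log(rT(r,f)))$ outside $E$. The delicate point will be arranging the Borel reduction so that a single exceptional set $E$ of finite linear measure controls the Poisson, zero, and pole terms simultaneously.
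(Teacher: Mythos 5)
The paper states Lemma~\ref{New L4} as the classical lemma on the logarithmic derivative and gives no proof, so the only question is whether your reconstruction of the standard argument is sound. Your architecture is the right one: differentiate the Poisson--Jensen representation, estimate the Poisson term and the zero/pole terms on $|z|=r$, and then remove the auxiliary radius by a Borel-type growth lemma with $R=r+1/T(r,f)$. The Poisson part and the Borel step are handled correctly (modulo the routine normalization $f(0)\ne 0,\infty$ needed to apply Poisson--Jensen at the origin, which you do not mention).

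The genuine gap is in the zero/pole sum. You integrate $\log^{+}$ of each term $1/|re^{i\theta}-c_k|$ separately and then sum over the zeros and poles, whose number is comparable to $T(R,f)$; you correctly record that this yields a contribution of order $T(R,f)\log\frac{R}{R-r}$. But you then assert that the combined estimate is $O\bigl(\log^{+}T(R,f)+\log R+\log\frac{R}{R-r}\bigr)$, which does not follow: a term of size $T(R,f)\log\frac{R}{R-r}$ cannot be absorbed into one of size $\log^{+}T(R,f)$. As written, after the Borel step your argument only gives $m(r,f'/f)=O(T(r,f)\log T(r,f))$, which is weaker even than the trivial bound $m(r,f'/f)\le T(r,f'/f)=O(T(r,f))$ and useless for the applications. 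The classical remedy is to keep the \emph{entire} sum $S(z)$ over zeros and poles inside a single logarithm: fix $0<\alpha<1$, bound $\frac{1}{2\pi}\int_{0}^{2\pi}|S(re^{i\theta})|^{\alpha}\,d\theta$ using $(\sum a_k)^{\alpha}\le\sum a_k^{\alpha}$ and the uniform estimate $\int_{0}^{2\pi}|re^{i\theta}-c|^{-\alpha}\,d\theta\le C(\alpha)r^{-\alpha}$, and then apply $\log^{+}x\le\frac{1}{\alpha}\log(1+x^{\alpha})$ together with the concavity of $\log$. In that arrangement the number of zeros and poles enters only through its logarithm, i.e.\ as $O(\log^{+}T)$, which is exactly what produces the stated bound $O(\log (rT(r,f)))$. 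Without this device (or Hayman's equivalent of it), the proof does not close.
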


We now present the $q$-difference analogue of the logarithmic derivative lemma given by Barnett et al. \cite{Barnett2007NevanlinnaTF} for meromorphic functions of order zero.

\begin{lemma}\label{New L2}\cite[Theorem 1.2]{Barnett2007NevanlinnaTF}
Let $f(z)$ be a non-constant meromorphic function of order zero, and let $q\in \mathbb{C}\setminus \left \{ 0,1 \right \}.$ Then $$m\left (r, \frac{f(qz)}{f(z)}  \right )=o(T(r,f(z)))$$
on a set of logarithmic density $1.$
\end{lemma}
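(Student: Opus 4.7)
The plan is to follow the strategy of Barnett, Halburd, Korhonen, and Morgan \cite{Barnett2007NevanlinnaTF}, who derive this estimate via a $q$-shifted variant of the Poisson--Jensen formula. One may assume without loss of generality that $|q|>1$, since the case $|q|<1$ reduces to this by a change of variable $z\mapsto z/q$ combined with Jensen's formula to swap the quotient with its reciprocal. The quantity of interest is the integral of $\log^{+}|f(qz)/f(z)|$ over the circle $|z|=r$.

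The first step is to represent $\log|f(z)|$ via the Poisson--Jensen formula on a disc $|w|<R$ with $R>|q|r$, expressing it as a boundary integral against the Poisson kernel plus sums involving the zeros and poles of $f$ inside $|w|<R$. Applying this to $\log|f(qz)|$ and $\log|f(z)|$ for $|z|=r$ and subtracting, the boundary contribution becomes a difference of two Poisson kernels, which can be bounded in modulus by $O((|q|-1)r/R)\cdot|\log|f(Re^{i\phi})||$. Integrating in $\theta$, the estimate $\int_{0}^{2\pi}|\log|f(Re^{i\phi})||\,d\phi/(2\pi)\leq 2T(R,f)+O(1)$ gives a boundary error of order $((|q|-1)r/R)\,T(R,f)$, while the sums over zeros and poles contribute a term bounded in terms of $n(R,f)+n(R,1/f)$ multiplied by a logarithmic factor of the form $\log(R/((|q|-1)r))$.

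The second step is to optimize $R$ using the order-zero hypothesis. Since $\log T(r,f)/\log r\to 0$, a Borel-type lemma guarantees that $T(|q|^{k}r,f)=T(r,f)+o(T(r,f))$ on a set of logarithmic density $1$, even when $k=k(r)$ grows sufficiently slowly. Choosing $R=|q|^{k(r)}r$ with $k(r)\to\infty$ suitably slowly makes the Poisson-kernel contribution $o(T(r,f))$, while the counting-function term is controlled by $N(R,f)+N(R,1/f)\leq 2T(R,f)+O(1)$ together with the standard device of dividing by $\log(R/r)$ to convert $n$ into $N$. The main obstacle is balancing these two error terms: the Poisson-kernel piece shrinks as $R/r\to\infty$, while the counting-function piece prefers $R$ close to $|q|r$. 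The order-zero assumption is precisely what permits both to be absorbed into $o(T(r,f))$ simultaneously; for a function of positive finite order, $T(R,f)/T(r,f)$ would grow like a power of $R/r$, and no choice of $R$ would suffice, which explains why the conclusion is restricted to the zero-order setting.
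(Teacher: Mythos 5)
The paper offers no proof of this lemma --- it is quoted with a citation to Barnett, Halburd, Korhonen and Morgan --- and your sketch is a faithful outline of precisely that source's argument: the Poisson--Jensen representation of $\log|f(qz)|-\log|f(z)|$ on a larger disc, the kernel-difference bound on the boundary integral, the counting-function control of the sums over zeros and poles, and the exploitation of the zero-order hypothesis through the fact that $T(Cr,f)=(1+o(1))T(r,f)$ on a set of logarithmic density one. Two points would need care in a full write-up: the reduction of $|q|<1$ to $|q|>1$ should not go through Jensen's formula (swapping $m(r,g)$ for $m(r,1/g)$ costs counting-function terms of size $T(r,f)$) but rather through the observation that the Poisson--Jensen argument actually bounds the integral of $|\log|f(qz)/f(z)||$ and hence controls both proximity functions simultaneously (indeed the representation works directly for every $q\neq 0$, so the reduction is unnecessary); and the conversion of $n(R,\cdot)$ into an $N$-term requires a second radius $R'>R$ chosen so that $\log(R/r)/\log(R'/R)\to 0$, since the per-zero contribution itself carries a factor growing like $\log(R/r)$, so the balancing act involves three radii rather than two.
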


\begin{lemma}\cite[Theorem 1.1, 1.3]{zhang2010nevanlinna}\label{New L3}
Let $f(z)$ be a non-constant meromorphic function of order zero, and let $q\in \mathbb{C}\setminus \left \{ 0,1 \right \}.$ Then 
\begin{align*}
N(r,f(qz)) & = (1+o(1))N(r,f(z)),\\\nonumber
T(r,f(qz)) & = (1+o(1))T(r,f(z)),
\end{align*}
on a set of lower logarithmic density one.
\end{lemma}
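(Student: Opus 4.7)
The plan is to reduce both assertions to a single elementary lemma about slow-growing functions: if $h:[1,\infty)\to[1,\infty)$ is non-decreasing and satisfies $\log h(r)=o(\log r)$, then for every fixed $\lambda>0$,
\[
\frac{h(\lambda r)}{h(r)}\longrightarrow 1
\]
outside an exceptional set of upper logarithmic density zero. Since a zero-order $f$ has $\log T(r,f)=o(\log r)$ and therefore also $\log(N(r,f)+1)=o(\log r)$, this lemma applied with $h=T(\cdot,f)$ and with $h=N(\cdot,f)+1$ would give both claims once the arguments of the two Nevanlinna functions have been re-scaled.

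First I would pass from $f(qz)$ to a dilation of the radius. A direct change of variable $t\mapsto t/|q|$ in the Nevanlinna definition of the integrated counting function gives
\[
N\!\left(r,f(qz)\right)=N\!\left(|q|r,f(z)\right)+O(1),
\]
the $O(1)$ accounting for a possible pole of $f$ at the origin, while the Bergweiler identity recorded as Proposition~3.1 supplies $T(r,f(qz))=T(|q|r,f(z))+O(1)$. Setting $\lambda:=|q|$, the task reduces to proving $T(\lambda r,f)=(1+o(1))T(r,f)$ and $N(\lambda r,f)=(1+o(1))N(r,f)$ off a set of upper logarithmic density zero.

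Second I would prove the auxiliary lemma. Replacing $\lambda$ with $1/\lambda$ if necessary, I may assume $\lambda>1$. For $\varepsilon>0$ set $E_\varepsilon=\{r\ge 1:\,h(\lambda r)/h(r)\ge 1+\varepsilon\}$. Telescoping along the geometric chain $r,\lambda r,\lambda^2 r,\dots$ yields, for every $r\ge 1$ and every integer $n\ge 1$,
\[
\log\frac{h(\lambda^{n}r)}{h(r)}\;=\;\sum_{k=0}^{n-1}\log\frac{h(\lambda^{k+1}r)}{h(\lambda^{k}r)}\;\ge\;\log(1+\varepsilon)\cdot \#\{0\le k<n:\lambda^{k}r\in E_\varepsilon\}.
\]
By the hypothesis $\log h(R)=o(\log R)$, the left-hand side is $o(n)$ uniformly in $r\in[1,\lambda]$, so along every chain the proportion of indices falling in $E_\varepsilon$ tends to zero. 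Integrating this chain-wise bound with respect to the Haar measure $dr/r$ on $[1,\lambda]$, equivalently summing the contributions of the cells $[\lambda^{k},\lambda^{k+1}]$ that partition $[1,\lambda^{n}]$, converts the bound into
\[
\mathrm{logmeas}\bigl(E_\varepsilon\cap[1,R]\bigr)\;=\;o(\log R),
\]
so $E_\varepsilon$ has upper logarithmic density zero. Letting $\varepsilon$ run through a countable sequence tending to $0$ completes the lemma, and the reductions in the previous step then give the statement for $T$ and $N$.

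The main obstacle is precisely this last passage from the telescoped chain inequality to the logarithmic-density conclusion. A standard Borel-type argument only gives $E_\varepsilon$ of finite logarithmic measure, which is strictly weaker than what is claimed. Upgrading to density one requires the averaging step above, whose key quantitative input is the bound
\[
\int_{1}^{R}\log\!\frac{h(\lambda r)}{h(r)}\,\frac{dr}{r}\;=\;o(\log R),
\]
obtained by exchanging the order of summation and integration in the telescoped form and invoking $\log h(R)=o(\log R)$; a Markov-type inequality then controls the logarithmic measure of $E_\varepsilon$ as needed, and supplies the lower logarithmic density one asserted in the lemma.
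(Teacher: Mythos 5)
This lemma is quoted in the paper without proof --- it is cited verbatim from Zhang--Korhonen \cite{zhang2010nevanlinna} --- so there is no in-paper argument to compare against; measured against the proof in that reference, your proposal follows essentially the same route (reduce $f(qz)$ to the radial dilation $r\mapsto |q|r$ via $T(r,f(qz))=T(|q|r,f)+O(1)$ and $N(r,f(qz))=N(|q|r,f)+O(1)$, then apply a density lemma for increasing functions $h$ with $\log h(r)=o(\log r)$), and your telescoping/averaging derivation of that density lemma is correct. The only points worth tightening are routine: extracting a \emph{single} exceptional set from the countable family $E_{1/m}$ requires a diagonal choice of thresholds $R_m$, and for the $N$-statement one should note that either $N(r,f)\equiv 0$ or $N(r,f)\to\infty$, so that the additive $O(1)$ and the shift from $N$ to $N+1$ are harmless.
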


A $q$-difference analogue of the Clunie lemma is given in \cite[Theorem 2.1]{Barnett2007NevanlinnaTF}; an improvement to this appears in \cite[theorem 2.5]{10.1112/jlms/jdm073} for general $q$-difference polynomials.
We also get a $q$-delay-differential analogue of the Clunie lemma by using a similar
method as in \cite[Theorem 2.5]{10.1112/jlms/jdm073}; see below.
Before presenting a $q$-delay-differential analogue of the Mo'honko theorem\cite[Theorem 10.1.7]{liu2021complex}, we 
recall the definition of a $q$-delay-differential polynomial (see \cite[Page 215]{liu2017meromorphic}). A $q$ delay-differential polynomial in $f(z)$ can be written the following form:
$$P(z,f(z))=\sum_{l\in L}b_{l}(z)f(z)^{l_{0},0}f(c_{1}z)^{l_{1},0}\dots f(c_{v}z)^{l_{v},0}f{'}(z)^{l_{0},1}\dots f^{\mu}(c_{\upsilon }z)^{l_{\upsilon},\mu }$$   
where the coefficients $b_{l}(z)$ are small meromorphic functions with respect to $f(z)$
in the sense that their Nevanlinna characteristic functions are $o(T(r, f))$ on a set
of logarithmic density one.

\begin{lemma}\label{New L5}
Let $f(z)$ be a transcendental meromorphic solution
of order zero of a $q$-delay-differential equation of the form
$$U_q(z, f)P_q(z, f) = Q_q(z, f),$$
where $U_q(z, f),$ $P_q(z, f)$ and $Q_q(z, f)$ are $q$-delay-differential polynomials such that
the total degree deg $U_q(z,f)=n$ in $f(z)$ and its $q$-shifts, whereas $\deg Q_q(z, f)\leq n.$
Moreover, we assume that $U_q(z, f)$ contains just one term of maximal total degree
in $f(z)$ and its $q$-shifts. Then
$$m(r, P_q(z, f))= o(T(r, f))$$
on a set of logarithmic density one.    
\end{lemma}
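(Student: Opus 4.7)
The plan is to adapt the standard Clunie-lemma argument (cf.~\cite[Theorem 2.5]{10.1112/jlms/jdm073}) to the $q$-delay-differential setting, combining the $q$-shift logarithmic derivative estimate of Lemma~\ref{New L2} with the classical Logarithmic Derivative Lemma (Lemma~\ref{New L4}). First I would split the integration circle in the definition of $m(r,P_q(z,f))$ into
$$E_{1}(r)=\{\theta\in[0,2\pi):|f(re^{i\theta})|\le 1\},\qquad E_{2}(r)=[0,2\pi)\setminus E_{1}(r),$$
and estimate $\frac{1}{2\pi}\int_{E_{1}}\log^{+}|P_q|\,d\theta$ and $\frac{1}{2\pi}\int_{E_{2}}\log^{+}|P_q|\,d\theta$ by two distinct methods.

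On $E_{1}(r)$ I would estimate $P_q(z,f)$ directly from its definition. Writing a general monomial of $P_q$ as $b_{l}(z)\prod_i f(c_{i}z)^{l_{i,0}}\prod_{j,k}f^{(k)}(c_{j}z)^{l_{j,k}}$, factoring
$$f(c_{i}z)=f(z)\cdot\frac{f(c_{i}z)}{f(z)},\qquad f^{(k)}(c_{j}z)=f(c_{j}z)\cdot\frac{f^{(k)}(c_{j}z)}{f(c_{j}z)},$$
and using $\log^{+}|xy|\le\log^{+}|x|+\log^{+}|y|$ together with $\log^{+}|f(z)|=0$ on $E_{1}(r)$, the contribution of $E_{1}(r)$ to $m(r,P_q)$ reduces to a finite linear combination of $m(r,b_{l})$, $m(r,f(c_{i}z)/f(z))$, and $m(r,f^{(k)}(c_{j}z)/f(c_{j}z))$. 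Each of these is $o(T(r,f))$ on a set of logarithmic density one: the first by the small-function hypothesis on the coefficients, the second by Lemma~\ref{New L2}, and the third by Lemma~\ref{New L4} applied at the shifted argument $c_{j}z$ (together with Proposition on $T(r,f(qz))$).

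On $E_{2}(r)$ I would use the equation $P_q=Q_q/U_q$. Let $T_{0}$ denote the unique monomial of maximal total degree $n$ in $U_q$, and write $U_q=T_{0}\bigl(1+\sum_{j\ge 1}T_{j}/T_{0}\bigr)$. Because $\deg T_{j}\le n-1$, each ratio $T_{j}/T_{0}$ rearranges as a product containing at least one factor of the form $1/f(c_{*}z)$ (possibly further shifted or differentiated) multiplied by ratios of the same types controlled in the previous paragraph. Since $|f(z)|>1$ on $E_{2}(r)$ and the pertinent proximity functions are $o(T(r,f))$ on a set of logarithmic density one, an exceptional-set argument yields $|U_q|\ge\tfrac{1}{2}|T_{0}|$ outside a $\theta$-set whose contribution is absorbed into $o(T(r,f))$. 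Consequently, on the admissible part of $E_{2}(r)$,
$$\log^{+}|P_q|\le\log^{+}\left|\frac{Q_q}{T_{0}}\right|+O(1),$$
and since $\deg Q_q\le n=\deg T_{0}$, the quotient $Q_q/T_{0}$ factors into precisely the ratios already controlled, giving a total contribution $o(T(r,f))$ on a set of logarithmic density one.

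The main obstacle is the lower bound $|U_q|\gtrsim|T_{0}|$ in the third step: although $T_{0}$ dominates in total degree, its competitors involve $f$, its $q$-shifts, and their derivatives evaluated at \emph{different} points, whose pointwise moduli are not controlled by $|f(z)|$ alone. The resolution relies on the zero-order hypothesis together with Lemma~\ref{New L2}, which forces every proximity function $m(r,f(c_{i}z)/f(z))$ to be $o(T(r,f))$ on a set of logarithmic density one; this is what makes the lower-order competitors of $T_{0}$ negligible outside a negligible exceptional $\theta$-set, uniformly over the finitely many ratios involved.
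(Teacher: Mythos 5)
Note first that the paper contains no proof of Lemma~\ref{New L5} at all: it is stated as following ``by a similar method as in'' Laine--Yang's Theorem~2.5, so your proposal can only be compared with that method. Your overall architecture is the right one and matches it: split the circle into the sets where $|f|\le 1$ and $|f|>1$, estimate $P_q$ directly on the first by factoring each monomial into $f(z)$-powers times the ratios $f(c_iz)/f(z)$ and $f^{(k)}(c_jz)/f(c_jz)$ (controlled via Lemma~\ref{New L2}, Lemma~\ref{New L4} and Lemma~\ref{New L3}), and use $P_q=Q_q/U_q$ on the second. Those parts are sound.

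The gap is exactly at the point you flag as the main obstacle, and your final paragraph does not close it. Knowing that $m(r,T_j/T_0)=o(T(r,f))$ on a set of logarithmic density one controls only the \emph{integral} of $\log^{+}|T_j/T_0|$; it does not make $|T_j/T_0|$ pointwise small outside a $\theta$-set whose contribution can simply be ``absorbed,'' because on that exceptional $\theta$-set you have no bound on $\log^{+}|P_q|$ whatsoever (neither $|U_q|\ge\tfrac12|T_0|$ nor anything replacing it), and $\int\log^{+}|1/U_q|\,d\theta$ is a priori of order $T(r,f)$, not $o(T(r,f))$. The standard repair is a \emph{three}-region decomposition: introduce the explicit majorant $h(z)=2J\sum_{j\ge1}|a_j/a_0|\prod_i\max\{|f(c_iz)/f(z)|,\,|f(z)/f(c_iz)|\}^{e_{i,j}}$ built from the coefficient ratios and the (finitely many) shift and derivative ratios occurring in the quotients $T_j/T_0$, so that $m(r,h)=o(T(r,f))$ on a set of logarithmic density one. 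Since $\deg T_j\le n-1$, on the region $\{|f|\ge\max(1,h)\}$ one gets the genuine pointwise bound $|T_j/T_0|\le|f|^{-1}\cdot h/(2J)\le 1/(2J)$, hence $|U_q|\ge\tfrac12|T_0|$ and your third step goes through; on the remaining region $\{1<|f|<h\}$ one must \emph{revert to the direct estimate} of $P_q$ from its definition, which costs an extra $\deg(P_q)\cdot\log^{+}h$ pointwise and therefore still integrates to $o(T(r,f))$. Without this intermediate region and the explicit majorant, the lower bound on $|U_q|$ is asserted rather than proved, and the argument on $E_2(r)$ does not go through as written.
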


\begin{lemma}\cite[Theorem 2,2]{Barnett2007NevanlinnaTF}\label{L6}
Let f(z) be a non-constant meromorphic solution of order zero to    
$$P(z,f)=0,$$
where $P(z,f)$ is a $q$-delay -differential polynomial in $f(z).$ If $P(z,a)\not \equiv 0$ for a small function $a(z),$ then 

$$m\left ( r,\frac{1}{f-a}  \right )=S(r,f)$$
on a set of logarithmic density one.
\end{lemma}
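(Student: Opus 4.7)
The plan is to adapt the classical Mohon'ko theorem for differential polynomials to the $q$-delay-differential setting, substituting the $q$-shift logarithmic derivative lemma (Lemma \ref{New L2}) for the ordinary logarithmic derivative lemma (Lemma \ref{New L4}) whenever a $q$-shift ratio $g(cz)/g(z)$ appears, and invoking the $q$-delay-differential Clunie-type result (Lemma \ref{New L5}) to dispose of residual high-degree monomials. Set $g(z)=f(z)-a(z)$; since $a$ is small with respect to $f$, we have $T(r,g)=T(r,f)+S(r,f)$, hence $S(r,g)=S(r,f)$.

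Substituting $f=g+a$ into $P(z,f)=0$ and expanding each factor $f^{(j)}(c_{i}z)^{l_{i,j}}=(g^{(j)}(c_{i}z)+a^{(j)}(c_{i}z))^{l_{i,j}}$ binomially, the contributions in which no $g$-type factor appears sum to exactly $P(z,a)$, so
\[
0=P(z,f)=P(z,a)+Q(z,g),
\]
where $Q(z,g)$ is a $q$-delay-differential polynomial in $g$ with small coefficients (polynomial combinations of the original coefficients of $P$ and of shifts/derivatives of $a$) satisfying $Q(z,0)=0$; equivalently, every monomial of $Q$ contains at least one factor of the form $g^{(j)}(c_{i}z)$. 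Since $P(z,a)\not\equiv 0$ is small, $T(r,P(z,a))=S(r,f)$ and $m(r,1/P(z,a))=S(r,f)$, so
\[
m\!\left(r,\frac{1}{f-a}\right)=m\!\left(r,\frac{1}{g}\right)\leq m\!\left(r,\frac{P(z,a)}{g}\right)+S(r,f).
\]

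Rearranging $-P(z,a)=Q(z,g)$ and dividing by $g(z)$, in each monomial of $Q$ we single out one distinguished factor $g^{(j_{*})}(c_{i_{*}}z)$ and use
\[
\frac{g^{(j_{*})}(c_{i_{*}}z)}{g(z)}=\frac{g^{(j_{*})}(c_{i_{*}}z)}{g(c_{i_{*}}z)}\cdot\frac{g(c_{i_{*}}z)}{g(z)},
\]
whose proximity function is $S(r,g)$ by Lemma \ref{New L4} applied to $g(c_{i_{*}}z)$ together with Lemma \ref{New L2}. The main obstacle is that the remaining residual factors $\Pi_{\lambda}(z,g)$ in each monomial are themselves $q$-delay-differential monomials in $g$ of total degree up to $\deg_{f}P-1$, and their proximity functions may be comparable with $T(r,g)$, so a naive termwise estimate fails. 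This is overcome either (i) by isolating in the equation the monomial of maximal total degree in $g$ and its $q$-shifts/derivatives on one side and applying Lemma \ref{New L5}, which forces the proximity of the offending product to be $o(T(r,g))$ on a set of logarithmic density one, or (ii) by induction on $\deg_{f}P$, with the linear base case handled directly by the two logarithmic-derivative lemmas above. Combining these estimates yields $m(r,P(z,a)/g)=S(r,f)$, and hence $m(r,1/(f-a))=S(r,f)$ on a set of logarithmic density one.
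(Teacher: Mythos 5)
The paper does not prove this lemma at all --- it is imported verbatim as Theorem~2.2 of Barnett et al.\ \cite{Barnett2007NevanlinnaTF} --- so there is no in-paper argument to compare against; your proposal has to be measured against the standard proof of that cited theorem. Your setup is right: writing $g=f-a$, expanding $P(z,g+a)=P(z,a)+Q(z,g)$ with $Q(z,0)=0$ and small coefficients, and reducing to an estimate of $m\left(r,P(z,a)/g\right)$ is exactly how that proof begins, and you correctly identify the crux, namely that after extracting one ratio $g^{(j_*)}(c_{i_*}z)/g(z)$ from each monomial the residual factors can have proximity comparable to $T(r,g)$.

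The gap is that neither of your two proposed fixes resolves this, and the actual resolution is missing. The standard device (Mohon'ko's trick, as in the proof of \cite[Theorem 2.2]{Barnett2007NevanlinnaTF} and of Theorem 2.4.2 in Laine's book) is to split the circle $|z|=r$ into $E(r)=\{\theta:|g(re^{i\theta})|<1\}$ and its complement: on the complement $\log^{+}(1/|g|)=0$, while on $E(r)$ one writes each monomial of $Q(z,g)/g(z)$ as $b_{\lambda}\,g(z)^{|\lambda|-1}\prod_{i,j}\bigl(g^{(j)}(c_{i}z)/g(z)\bigr)^{\lambda_{i,j}}$ and uses $|g(z)|^{|\lambda|-1}\leq 1$ there, so that only the small coefficients, $m(r,1/P(z,a))$, and the logarithmic-derivative/$q$-shift ratios survive the $\log^{+}$ integral. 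No residual high-degree factor ever needs a proximity estimate. Your alternative (i) cannot substitute for this: Lemma~\ref{New L5} requires the equation to be presented as $U_{q}P_{q}=Q_{q}$ with $U_{q}$ containing a \emph{single} term of maximal total degree and $\deg Q_{q}\leq\deg U_{q}$, hypotheses the identity $-P(z,a)=Q(z,g)$ need not satisfy (several maximal-degree monomials, no canonical factorization), and even when it applies its conclusion bounds $m(r,P_{q})$ for the cofactor, not $m(r,1/g)$. Your alternative (ii), induction on $\deg_{f}P$, is asserted without any mechanism for the inductive step. As written, the proof does not close; inserting the $E(r)$-splitting argument would repair it.
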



The original form of the next lemma appears in \cite[Lemma~2.1]{halburd2017growth}, where it is stated for a delay-differential equation. 
Here, we present its \( q \)-delay differential analogue, following the main idea of the proof given in the original work.

\begin{lemma}\label{New L7}
Let $f(z)$ be a non-rational meromorphic solution of 
\begin{align}\label{E97} 
P(z,f(z))=0
\end{align}
Here, $P(z,f(z))$ denotes a $q$ delay differential polynomial in $f(z)$ with rational coefficients. Let $a_{1},$ \dots, $a_{k}$ be rational functions such that $P(z, a_{j})\not \equiv 0$ for all $j\in \left \{ 1,\dots,k \right \}.$ Assume further that   $q\in \mathbb{C} \setminus \left \{ 0,1 \right \}.$
If there exists $s>0$ and $\tau \in \left ( 0,1 \right ) $ such that \begin{align}\label{E98}
\sum_{j=1}^{k}n\left ( r,\frac{1}{f-a_{j}}  \right )\leq k\tau n(\left | q \right |r,f )+O(1),      
\end{align}
then the order of $f(z)$ is positive.
\end{lemma}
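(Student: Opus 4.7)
The plan is to argue by contradiction: assume $f(z)$ has order zero, and derive a contradiction with the non-rationality of $f(z)$. The key ingredients are the $q$-delay-differential Valiron-type estimate of Lemma~\ref{L6}, applied to each rational target $a_j$ (which is legitimate because $P(z,a_j)\not\equiv 0$ and each $a_j$ is small with respect to $f$), together with the $q$-growth comparison of Lemma~\ref{New L3}. Integrating the pointwise counting-function inequality \eqref{E98} against $dt/t$ and combining it with the First Main Theorem should yield a self-contradictory upper bound on $T(r,f)$.

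First I would apply Lemma~\ref{L6} to each $a_j$, obtaining
$$m\!\left(r,\frac{1}{f-a_{j}}\right)=S(r,f)$$
on a set of logarithmic density one. Since $T(r,1/(f-a_j))=T(r,f)+S(r,f)$ by the First Main Theorem, this yields
$$\sum_{j=1}^{k} N\!\left(r,\frac{1}{f-a_{j}}\right)=kT(r,f)+S(r,f).$$
Next I integrate \eqref{E98} against $dt/t$; using $N(r,g)=\int_{1}^{r}n(t,g)/t\,dt+O(1)$ together with the substitution $u=|q|t$, which gives $\int_{1}^{r}n(|q|t,f)/t\,dt=N(|q|r,f)+O(1)$, one obtains
$$\sum_{j=1}^{k}N\!\left(r,\frac{1}{f-a_{j}}\right)\leq k\tau N(|q|r,f)+O(\log r)\leq k\tau T(|q|r,f)+O(\log r).$$
Comparing these two displays produces $T(r,f)\leq \tau T(|q|r,f)+S(r,f)+O(\log r)$ on a set of logarithmic density one.

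The last step is to eliminate the dilation factor $|q|$ on the right-hand side. When $0<|q|<1$ this is immediate from the monotonicity of $r\mapsto T(r,f)$, giving $(1-\tau)T(r,f)\leq S(r,f)+O(\log r)$, which forces $T(r,f)=O(\log r)$ and hence $f$ rational---a contradiction. When $|q|>1$ monotonicity goes the wrong way, and I would instead invoke Lemma~\ref{New L3}, which under the order-zero assumption supplies $T(|q|r,f)=(1+o(1))T(r,f)$ on a set of lower logarithmic density one; this reduces matters to $(1-\tau+o(1))T(r,f)\leq S(r,f)+O(\log r)$ and produces the same contradiction.

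The main technical obstacle I anticipate is reconciling the various exceptional sets: Lemma~\ref{L6} delivers its bound on a set of logarithmic density one, Lemma~\ref{New L3} only on a set of lower logarithmic density one, and the $S(r,f)$ notation itself tolerates an exceptional set of finite logarithmic measure. One must verify that the intersection of these three sets is unbounded, which ultimately follows from the fact that the union of their complements has lower logarithmic density strictly less than one. A secondary, more routine point is confirming that the boundary contributions produced by the change of variable $u=|q|t$ can be absorbed into $O(\log r)$ uniformly for both ranges $|q|<1$ and $|q|>1$.
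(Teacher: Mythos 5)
Your argument is correct and follows essentially the same route as the paper's proof: assume order zero, get $m\left(r,\frac{1}{f-a_j}\right)=S(r,f)$ from the $q$-analogue of Mohon'ko's theorem, integrate the counting-function hypothesis \eqref{E98}, and combine with the First Main Theorem and the order-zero identity $T(|q|r,f)=(1+o(1))T(r,f)$ to force $T(r,f)\leq \tau T(r,f)+S(r,f)$, a contradiction. The only cosmetic differences are that you invoke Lemma~\ref{L6} where the paper rederives the same estimate from Lemmas~\ref{New L4} and~\ref{New L2}, and that you split the cases $|q|<1$ and $|q|>1$ where the paper handles both uniformly via \eqref{new 1} and Lemma~\ref{New L3}; your explicit attention to the exceptional sets is a point the paper leaves implicit.
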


\begin{proof}
We suppose against the conclusion that $\rho (f)=0,$ aiming to obtain a
contradiction. We first show that the assumption $P(z,a_{j})\not \equiv 0$ implies that \begin{align}\label{E71}
m\left ( r,\frac{1}{f(z)-a_{j}(z)}  \right )=S(r,f(z)). \end{align}
This is obvious by using Lemma~\ref{New L4} and Lemma~\ref{New L2} and its proof is similar to the original proof. We will omit its proof here.
To finish the proof, we observe that from the assumption \eqref{E98} it follows that
\begin{align}\label{E99}
\sum_{j  = 1}^{k}N\left ( r,\frac{1}{f-a_{j}}  \right )\leq k(\tau+\epsilon)N\left ( \left | q \right |r,f(z)  \right )+O(\log r)  \end{align}
where $\epsilon>0$ is chosen so that $\tau +\epsilon<1.$ The first main theorem of Nevanlinna
theory now yields
\begin{align}\label{New 70}
kT(r,f(z)) = \sum_{j = 1}^{k}\left (m\left ( r,\frac{1}{f-a_{j}}  \right )+N\left ( r,\frac{1}{f-a_{j}}  \right )   \right )+O(\log r).  
\end{align}
By combining \eqref{E71}, \eqref{E99},  \eqref{New 70}, and \eqref{new 1},  it follows that  
\begin{align}
kT(r,f(z))&\leq k(\tau +\epsilon )N\left ( \left | q \right |r,f(z)  \right )+S(r,f(z))\\\nonumber
&\leq k(\tau +\epsilon )T\left ( \left | q \right |r,f(z)  \right )+S(r,f(z))\\\nonumber
&=k(\tau +\epsilon )T\left ( r,f(qz)  \right )+S(r,f(z))\\\nonumber
&=k(\tau +\epsilon )T\left ( r,f(z)  \right )+S(r,f(z))\\\nonumber
&<kT\left ( r,f(z)  \right )+S(r,f(z)).\\\nonumber
\end{align}
This is a contradiction. Hence, $\rho (f)>0.$
\end{proof}

\begin{corollary}\label{C1}\cite{wen2012exponential}
Let $t$ be a positive integer, $a_{0}(z),$ \dots, $a_{n}(z)$ be either exponential polynomials of degree $<t$ or ordinary polynomials in $z,$ and $b_{1}, \dots, b_{n}\in \mathbb{C}\setminus {0}$ be distinct constants. Then
\begin{align*}
\sum_{j= 1}^{n}a_{j}(z)e^{b_{j}z^{t}}= a_{0}   
\end{align*}
holds only when $a_{0}(z)\equiv \dots \equiv a_{n}(z)\equiv0.$
\end{corollary}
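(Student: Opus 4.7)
The plan is to argue by contradiction via a directional growth analysis. Suppose the identity $\sum_{j=1}^n a_j(z)e^{b_j z^t}=a_0(z)$ holds while some $a_j$ with $j\geq 1$ is not identically zero; after relabeling we may assume $a_1,\dots,a_m\not\equiv 0$ for some $1\leq m\leq n$ and $a_{m+1}=\cdots=a_n\equiv 0$. (If every $a_j$ with $j\geq 1$ already vanishes then $a_0\equiv 0$ follows immediately and there is nothing to prove.)

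On a ray $z=re^{i\theta}$, $r>0$, I would write $|e^{b_j z^t}|=\exp(r^t u_j(\theta))$ with $u_j(\theta):=\mathrm{Re}(b_j e^{it\theta})$. The real-analytic functions $u_1,\dots,u_m$ are pairwise distinct because the $b_j$ are, and each attains its maximum $|b_j|>0$. I would then pick $j_0\in\{1,\dots,m\}$ and a direction $\theta_0$ at which $u_{j_0}(\theta_0)>u_k(\theta_0)$ for every $k\neq j_0$ in $\{1,\dots,m\}$ and $u_{j_0}(\theta_0)>0$; such a choice exists because the upper envelope $M(\theta):=\max_j u_j(\theta)$ coincides with a unique $u_j$ on some open subset of $\{\theta:M(\theta)>0\}$ by analyticity.

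On the ray $\arg z=\theta_0$, each term $a_k(z)e^{b_k z^t}$ with $k\neq j_0$ has modulus at most $\exp((u_{j_0}(\theta_0)-\delta)r^t)$ for some uniform $\delta>0$, since each $a_k$ is either a polynomial or an exponential polynomial of degree strictly less than $t$ and so grows at most like $\exp(Cr^s)$ for some $s<t$; the analogous bound holds for $a_0(z)$. Rearranging the identity and taking moduli yields
\[
|a_{j_0}(re^{i\theta_0})|\cdot\exp(r^t u_{j_0}(\theta_0))\leq (n+1)\exp((u_{j_0}(\theta_0)-\delta)r^t)
\]
for all sufficiently large $r$, so $|a_{j_0}(re^{i\theta_0})|\leq(n+1)\exp(-\delta r^t)$, forcing super-exponential-in-$r^t$ decay of $a_{j_0}$ along the ray.

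The main obstacle is to rule out this decay. An exponential polynomial of degree $<t$ cannot decay faster than any $\exp(-\eta r^t)$ with $\eta>0$ on a suitable ray avoiding its discrete zero set; this is the crucial step and can be established either via a Phragm\'en--Lindel\"of indicator-function argument (the indicator of $a_{j_0}$ has order strictly less than $t$) or, more cleanly, by induction on the exponential degree of the coefficients: writing each $a_j$ in the canonical form \eqref{E37} with its own leading frequencies, one reduces to the case where all $a_j$ are ordinary polynomials in $z$, where the conclusion is classical. This inductive route is the one followed in the cited reference \cite{wen2012exponential}.
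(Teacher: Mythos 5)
The paper does not actually prove this statement: Corollary~\ref{C1} is imported verbatim from the cited reference \cite{wen2012exponential} and used as a black box in the proof of Theorem~\ref{T1}(iii), so there is no in-paper argument to compare yours against. Judged on its own, your ray-growth strategy is a legitimate and standard route to this Borel-type lemma, and most of it is sound: the choice of $\theta_{0}$ is justified correctly (the functions $u_{j}(\theta)=\mathrm{Re}\,(b_{j}e^{it\theta})$ are distinct trigonometric functions of the same frequency, so any two coincide only on a discrete set, and the region $\{\theta: M(\theta)>0\}$ is open and nonempty), and the resulting bound $|a_{j_{0}}(re^{i\theta_{0}})|\leq (n+1)\exp(-\delta r^{t})$ is derived correctly because every coefficient has order at most $t-1$.

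The one place where your write-up does not close is the step you yourself flag as crucial, and the issue is in the quantifier: you assert that a nonzero exponential polynomial of degree $<t$ cannot decay like $\exp(-\eta r^{t})$ \emph{on a suitable ray}, but the ray $\theta_{0}$ has already been fixed by the dominance argument, so a non-decay statement on some ray of your choosing contradicts nothing. What you need, and what is true, is the statement on \emph{every} ray: for a nonzero entire function $a$ of order $\rho\leq t-1$ and finite type, the indicator $h_{a}(\theta)=\limsup_{r\to\infty}r^{-\rho}\log|a(re^{i\theta})|$ is $\rho$-trigonometrically convex and hence finite at every $\theta$ (it can equal $-\infty$ somewhere only if $a\equiv 0$); this produces a sequence $r_{k}\to\infty$ with $\log|a_{j_{0}}(r_{k}e^{i\theta_{0}})|\geq -Cr_{k}^{t-1}$, which is incompatible with the uniform bound $-\delta r_{k}^{t}+O(1)$. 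With that indicator fact stated and invoked at the fixed angle $\theta_{0}$ (the polynomial case being trivial), your argument is complete; your alternative suggestion of inducting on the exponential degree of the coefficients is the route actually taken in \cite{wen2012exponential}, and either repair is acceptable.
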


\section{proof of Theorem \ref{T3}}
\begin{proof}

(i) Suppose that $f(z)$ is a non-constant entire solution of \eqref{New E1}. 
Then, \eqref{New E1} can be written as 
\begin{align}\label{E100}
\frac{{f}'(z)}{f(z)} & = a(z)\frac{f(qz)}{f(z)}+\frac{P(z,f(z))}{f(z)\cdot Q(z,f(z))}.   
\end{align}
Let $\frac{P(z,f(z))}{f(z)\cdot Q(z,f(z))}=K(z, f(z)).$ Then \eqref{New E1} becomes 
\begin{align}\label{New E42}
\frac{{f}'(z)}{f(z)} & = a(z)\frac{f(qz)}{f(z)}+K(z, f(z)).   
\end{align}
By using Lemma~\ref{New L4}, we have
\begin{align}\label{E59}
m\left( r, K(z, f(z)) \right) 
  &\leq m\left( r, \frac{f'(z)}{f(z)} \right) + m\left( r, \frac{f(qz)}{f(z)} \right) + S(r, f(z)) \\\nonumber
  &\leq m\left( r, \frac{1}{f(z)} \right) + m(r, f(qz)) + S(r, f(z)).
\end{align}
Additionally, since $f(z)$ is an entire solution of \eqref{New E1},
it follows that
\begin{align}\label{E60}
N\left( r, K(z, f(z)) \right) 
  &\leq N\left( r, \frac{1}{f(z)} \right) + N\left( r, f'(z) - a(z)f(qz) \right) \\\nonumber
  &= N\left( r, \frac{1}{f(z)} \right) + S(r, f(z)).
\end{align}
Combining \eqref{new 1}, \eqref{E59}, \eqref{E60}, the first main theorem of Nevanlinna, and the definition of  small functions, we have
\begin{align}\label{E61} 
T(r,K(z, f(z)))&\leq T\left ( r,f \right)+m(r,f(qz))+S(r,f(z))\\\nonumber
&= T\left ( r,f \right)+T(r,f(qz))+S(r,f(z))\\\nonumber
&= T(r,f)+T\left ( \left | q \right |r,f  \right )+  S(r,f(z)).\\\nonumber
\end{align}
Then we see that 
\begin{align}\label{New E80}
T\left ( r,K(z, f(z)) \right )\leq 2T(r,f(z))+S(r,f).  \end{align}
Noting that $K(z, f(z))=\frac{P(z,f(z))}{f(z)Q(z,f(z))},$ it follows that there are two cases that need to be discussed. Firstly, if $f(z)$ is a factor of $P(z,f(z)),$ then
using \cite[Theorem 2.2.5]{laine2011nevanlinna} yields 
\begin{align}\label{New E81}
T(r,K(z, f(z)))=\max \left \{ \deg_{f}(P)-1,\deg_{f}(Q) \right \}T(r,f(z))+S(r,f(z)). \end{align}
Together with \eqref{New E80} and \eqref{New E81}, we obtain $$\deg_{f}(P)\leq 3,\,\,\,\, \deg_{f}(Q)\leq 2.$$
If $f(z)$ is not a factor of $P(z,f(z)),$ then
using \cite[Theorem 2.2.5]{laine2011nevanlinna} gives 

\begin{align}\label{New E41}
T(r,K(z, f(z)))=\max \left \{ \deg_{f}(P),1+\deg_{f}(Q) \right \}T(r,f(z))+S(r,f(z)).    
\end{align}
Together with \eqref{New E80} and \eqref{New E41} imply that $$\deg_{f}(P)\leq 2,\,\,\,\, \deg_{f}(Q)\leq 1.$$

(ii) We now assume that $f(z)$ is a non-constant meromorphic solution of order $0$ of \eqref{New E1}. From \eqref{E100}, together with Lemma~\ref{New L4} and Lemma~\ref{New L2}, it follows that \begin{align}\label{New 
E30} 
m(r,K(z,f(z)))=S(r,f(z)) 
\end{align} on a set of logarithmic density one. Define two sets $A=\left \{ z:f(z)=\infty  \right \},B= \left \{ z:f(z)\not =\infty  \right \}.$
Then, by applying Lemma~\ref{New L3} to \eqref{E100}, we have
\begin{align}\label{New E26}
N_{B}(r, K(z,f(z))) 
  &= N_{B}\left( r, \frac{f{'}(z) - a(z)f(qz)}{f(z)} \right) \\\nonumber
  &\leq N\left( r, \frac{1}{f(z)} \right) + N_{B}(r,f(qz)) + S(r,f(z)) \\\nonumber
  &\leq N\left( r, \frac{1}{f(z)} \right) + N(r,f(z)) + S(r,f(z))
\end{align}
on a set of lower logarithmic density one. Since $K(f(z))=\frac{P(z,f(z))}{f(z)Q(z,f(z))},$
there are two cases that need to be discussed. 

Firstly, if $f(z)$ is not a factor of $P(z,f(z)),$ then 
\begin{align}\label{New E28}
N_{B}(r, K(z,f(z)))&= N_{B}\left( r, \frac{P(z,f(z))}{f(z)Q(z,f(z))}  \right) \\\nonumber
  &=N_{B}\left ( r,\frac{1}{f(z)Q(z,f(z))}  \right )+S(r,f(z))\\\nonumber
  &=N\left ( r,\frac{1}{f(z)Q(z,f(z))}  \right )+S(r,f(z)).\\\nonumber
 \end{align}
Together, \eqref{New E26} and \eqref{New E28} yield \begin{align}\label{New E37}
N\left ( r,\frac{1}{f(z)Q(z,f(z))}  \right )\leq N\left ( r,f(z) \right )+N\left ( r,\frac{1}{f(z)}  \right )+S(r,f(z)) \end{align}
on a set of lower logarithmic density one.
By \eqref{New E37}, we have \begin{align}\label{New E29}
&N\left ( r, K(z,f(z)) \right )\\\nonumber 
=& N\left ( r,\frac{P(z,f(z))}{f(z)Q(z,f(z))}  \right )\\\nonumber
=&\max \left \{ \deg_{f}(P)-\deg_{f}(Q)-1,0 \right \}N(r,f)+N\left ( r,\frac{1}{f(z)Q(z,f)}  \right )+S(r,f)\\\nonumber
\leq& \max \left \{ \deg_{f}(P)-\deg_{f}(Q)-1,0 \right \}N(r,f)+N(r,f)+N\left ( r,\frac{1}{f}  \right )+S(r,f)    
\end{align}
on a set of lower logarithmic density one.
If $\deg_{f}(P)> \deg_{f}(Q)+1,$ then \begin{align}\label{New E87}
N\left ( r, K(z,f(z)) \right )\leq  (\deg_{f}(P)-\deg_{f}(Q))N(r,f)+N\left ( r,\frac{1}{f}  \right )+S(r,f)   
\end{align}
on a set of lower logarithmic density one.
Together, \eqref{New E30}, \eqref{New E87}, and \cite[Theorem 2.2.5]{laine2011nevanlinna} yield \begin{align}\label{New E31}
T\left ( r, K(z,f(z)) \right ) & = T\left ( r,\frac{P(z,f(z))}{f(z)Q(z,f(z))}  \right )\\\nonumber
 &=\max \left \{ \deg_{f}(P),1+\deg_{f}(Q)  \right \}T(r,f(z))+S(r,f) \\\nonumber
&\leq (\deg _{f}(P)-\deg _{f}(Q))N(r,f)+N\left ( r,\frac{1}{f}  \right )+S(r,f) \\\nonumber
&\leq (\deg _{f}(P)-\deg _{f}(Q)+1)T(r,f)+S(r,f) 
\end{align} 
on a set of logarithmic density one or of infinity linear measure. This means 
\begin{align}\label{New E89} 
\deg_{f}(Q)\leq 1,\,\,\,\,\deg_{f}(P)\geq 2\deg _{f}(Q).
\end{align}
If $\deg_{f}(P)\leq  \deg_{f}(Q)+1,$ then by \eqref{New E29} we have 
\begin{align}\label{New E88}
 N(r,K(f(z)))\leq N(r,f)+N\left ( r,\frac{1}{f}  \right )+S(r,f)    \end{align}
on a set of lower logarithmic density one. 
Together, \eqref{New E30}, \eqref{New E88}, and \cite[Theorem 2.2.5]{laine2011nevanlinna} yield \begin{align*}
T\left ( r, K(z,f(z)) \right ) & = T\left ( r,\frac{P(z,f(z))}{f(z)Q(z,f(z))}  \right )\\\nonumber
 &=\max \left \{ \deg_{f}(P),1+\deg_{f}(Q)  \right \}T(r,f(z)) \\\nonumber
&\leq N(r,f)+N\left ( r,\frac{1}{f}  \right )+S(r,f) \\\nonumber
&\leq 2T(r,f)+S(r,f) 
\end{align*} 
on a set of lower logarithmic density one or of infinity linear measure.
This means 
\begin{align}\label{New E90} 
\deg_{f}(P)\leq 2,\,\,\,\,\deg_{f}(Q)\leq 1.  
\end{align}
Additionally, if $N(r,f(z))=S(r,f(z)),$ then by \eqref{New E29} we see that 
\begin{align}\label{New E38}
N(r,K(f(z)))\leq N\left ( r,\frac{1}{f(z)}  \right )+S(r,f(z)). 
\end{align}
Together, \eqref{New E30}, \eqref{New E38}, and \cite[Theorem 2.2.5]{laine2011nevanlinna} yield 
\begin{align}\label{New E39}
T(r,K(f(z))) & = \max \left \{ \deg_{f}(P),\deg_{f}(Q)+1   \right \}T(r,f(z))\\\nonumber
&\leq N\left ( r,\frac{1}{f(z)}  \right )+S(r,f(z))\\\nonumber
&\leq T(r,f(z))+S(r,f(z))
\end{align}
on a set of lower logarithmic density one or of infinity linear measure.
This means 
$$\deg_{f}(P)\leq 1,\,\,\,\, \deg_{f}(Q)=0.$$

We now assume that $f(z)$ is a factor of $P(z,f(z)).$ Then \begin{align}\label{New E86} 
K(z,f(z))=\frac{P(z,f(z))}{f(z)Q(z,f(z))}=\frac{\hat P(z,f(z))}{Q(z,f(z))}, \end{align} 
where $\hat P(z,f(z))=\frac{P(z,f(z))}{f(z)}$ is a polynomial in $f(z).$
By the definition of set $B,$ it follows that
\begin{align}\label{New E82}
 N_{B}\left ( r,R(z,f(z)) \right )=N_{B}\left ( r,\frac{P(z,f(z))}{Q(z,f(z))}  \right )=N\left ( r,\frac{1}{Q(z,f(z))}  \right )+S(r,f(z)).     
\end{align}
Also, by applying Lemma~\ref{New L3} and \eqref{New E1}, it follows that

\begin{align}\label{New E83}
 N_{B}\left ( r,R(z,f(z)) \right )=&N_{B}\left ( r,{f}'(z)-a(z)f(qz)  \right )\\\nonumber
=&N_{B}(r,f(qz))+S(r,f(z))\\\nonumber
\leq& N(r,f(z))+S(r,f(z))
\end{align}
on a set of lower logarithmic density one. Together, \eqref{New E82} and \eqref{New E83} yield 
\begin{align}\label{New E84}
N\left ( r,\frac{1}{Q(z,f(z))}  \right )\leq N(r,f(z))+S(r,f(z))\end{align}
on a set of lower logarithmic density one.
In this case, by combining \eqref{New E86} with \eqref{New E84}, we have \begin{align}\label{New E85}
&N\left ( r,K(z,f(z))  \right )\\\nonumber
=&\max \left \{ \deg _{f}(P)-\deg _{f}(Q)-1,0 \right \}N(r,f(z))+N\left ( r,\frac{1}{Q(z,f(z))}  \right )+S(r,f(z))\\\nonumber  
\leq &\max \left \{ \deg _{f}(P)-\deg _{f}(Q)-1,0 \right \}N(r,f(z))+N(r,f)+S(r,f)\\\nonumber
\end{align}
on a set of lower logarithmic density one.
If $\deg_{f}(P)>\deg_{f}(Q)+1,$ then by combining \eqref{New E30}, \eqref{New E85}, with \cite[Theorem 2.2.5]{laine2011nevanlinna}, 
we have \begin{align}\label{New E91}
&T\left ( r,K(z,f(z))  \right )\\\nonumber
=&\max \left \{ \deg_{f}(P)-1,\deg_{f}(Q)   \right \}T(r,f(z))+S(r,f) \\\nonumber 
\leq& (\deg_{f}(P)-\deg_{f}(Q))N(r,f(z))+S(r,f(z))\\\nonumber
\leq& (\deg_{f}(P)-\deg_{f}(Q))T(r,f(z))+S(r,f(z))\\\nonumber
\end{align}
on a set of lower logarithmic density one or of infinity linear measure.
This means \begin{align}\label{New E93} 
\deg_{f}(P)\geq 2\deg_{f}(Q),\,\,\,\,\deg_{f}(Q)\leq 1.
\end{align}
If $\deg_{f}(P)\leq\deg_{f}(Q)+1,$ then by combining \eqref{New E30}, \eqref{New E85}, with \cite[Theorem 2.2.5]{laine2011nevanlinna}, 
we have \begin{align}\label{New E94}
&T\left ( r,K(z,f(z))  \right )\\\nonumber
=& \max \left \{ \deg_{f}(P)-1,\deg_{f}(Q)   \right \}T(r,f(z))+S(r,f(z))\\\nonumber
\leq& N(r,f)+S(r,f).\\\nonumber
\leq& T(r,f)+S(r,f)\\\nonumber
\end{align}
on a set of lower logarithmic density one or of infinity linear measure.
This means 
\begin{align}\label{New E95}
\deg_{f}(P)\leq 2,\,\,\,\,\deg _{f}(Q)\leq 1.
\end{align}
Further, if $N(r,f(z))=S(r,f(z)),$ then by \eqref{New E85} we see that 
\begin{align}\label{New E40}
N(r,K(f(z)))=S(r,f(z)). 
\end{align}
Together, \eqref{New E30}, \eqref{New E40}, and \cite[Theorem 2.2.5]{laine2011nevanlinna} yield 
\begin{align*}
T(r,K(f(z))) & = \max \left \{ \deg_{f}(P)-1,\deg_{f}(Q)   \right \}T(r,f(z))=S(r,f(z))\\\nonumber
\end{align*}
on a set of lower logarithmic density one or of infinity linear measure.
This means 
$$\deg_{f}(P)\leq 1,\,\,\,\, \deg_{f}(Q)=0.$$

From the above analysis, we see that, regardless of whether $f(z)$ is a factor of $P(z,f(z)),$ the following holds:
If 
$\deg_{f}(P)>\deg_{f}(Q)+1,$ then
$$\deg_{f}(Q)\leq 1,\,\,\,\,\deg_{f}(P)\geq 2\deg_{f}(Q).$$ 
If $\deg_{f}(P)\leq \deg_{f}(Q)+1,$ then
$$\deg_{f}(Q)\leq 1,\,\,\,\,\deg_{f}(P)\leq 2.$$ 
If $N(r,f(z))=S(r,f(z)),$ then 
$$\deg_{f}(Q)=0,\,\,\,\, \deg_{f}(P)\leq 1.  $$

(iii) 
(a) We first prove that if $0<\left | q \right |<1 ,$ then 
$\deg_{f}(Q)= 1,\deg_{f}(P)= 3, \,\,\,\text{or}\,\,\, \deg_{f}(Q)= 0,\deg_{f}(P)\leq 2.$ 

Applying Lemma~\ref{New L3} and \cite[Theorem 2.2.5]{laine2011nevanlinna} to \eqref{New E1}, it follows that
\begin{align}\label{New E44}
\max\left\{ \deg_f(P), \deg_f(Q) \right\} T(r, f)
&= T(r, R(z, f(z))) \nonumber \\
&= T(r, f'(z) - a(z) f(qz)) \nonumber \\
&\leq 2T(r, f(z)) + (1 + o(1)) T(r, f(z)) + S(r, f(z)) \nonumber \\
&= 3T(r, f(z)) + S(r, f(z)) \nonumber \\
\end{align}
on a set of lower logarithmic density one. 
By (ii) we see that if the order of $f(z)$ is zero, then $\deg_{f}(Q)\leq 1.$ 
We now prove that if $\deg_{f}(Q)= 1,$ then $\deg_{f}(P)=\deg_{f}(Q)+2=3.$
Let $\deg_{f}(Q)= 1.$ Then equation \eqref{New E1} can be written in the following form:
\begin{align}\label{E80}
{f}'(z) & = a(z)f(qz)+\frac{P(z,f(z))}{f(z)-b(z)},  
\end{align}
where 
$b(z)$ is a rational function.
Applying Lemma~\ref{L6} to \eqref{E80} yields 
\begin{align}\label{E81}
N\left ( r,\frac{1}{f(z)-b(z)}  \right ) & = T(r,f(z))+S(r,f(z))
\end{align}
on a set of logarithmic density one. Note that $a(z)$ and all the coefficients of $R(z,f(z))$ are rational. This implies that all the zeros and poles of $a(z),$ all the zeros and poles of the coefficients of $R(z,f(z)),$ as well as those arising from their finite iterations, can be contained within a large bounded disk $D.$
Then, by \eqref{E81}, we can choose a sufficiently large $\left | z_{0} \right | $ such that $f(z_{0})-b(z_{0})$ has a zero of multiplicity $t\ge 1$, where $z=z_{0}$ is neither a zero nor a pole of $a(z),$ nor of any coefficient of $R(z,f(z)),$ nor of any finite iteration of $a(z)$ or the coefficients of $R(z,f(z)).$
Such a point $z=z_{0}$ is called a \textit{generic zero} of $f(z)-b(z),$ and it satisfies that $z=q^{n}z_{0}$ lies outside the disk $D,$ where $0<\left | q \right |<1 $ and $n$ is an arbitrarily given positive integer.
Then we can iterate \eqref{E80} $n$ times and the zeros and poles of $a(z),$ the coefficients of $R(z,f(z))$ and their iterated terms do not influence the number of poles of $f(q^{n}z)$ in the iteration of \eqref{E80}. Then, by \eqref{E80}, we have $f(qz_{0})=\infty^{t}.$ By iterating \eqref{E80} one step, we have \begin{align}\label{E82}
{f}'(qz) & = a(qz)f(q^{2}z)+\frac{P(qz,f(qz))}{f(qz)-b(qz)}.  
\end{align}
We assume now that 
$\deg_{f}(P)\leq \deg_{f}(Q)=1.$ Then $f(q^{2}z_{0})=\infty^{t+1}.$ By continuing to iterate \eqref{E82}, we have \begin{align}\label{E83}
{f}'(q^{2}z) & = a(q^{2}z)f(q^{3}z)+\frac{P(q^{2}z,f(q^{2}z))}{f(q^{2}z)-b(q^{2}z)}.  
\end{align}
Then $f(q^{3}z_{0})=\infty^{t+2}.$ Note that $z=q^{n}z_{0}$ still lies outside of the disk $D.$ Hence, we can continue the above iteration without considering the influence of the zeros or poles of  $a(z),$ the coefficients of $R(z,f(z)),$ or their finite iteration. It then follows that $f(q^{n}z_{0})=\infty^{t+n-1},$ where $n$ is an arbitrarily given positive integer.

Note that $0<\left | q \right |<1 .$ For a given $n,$  after iterating \eqref{E80} for multiple steps, the coefficients maybe cancel part of the poles of $f(q^{m}z),$ where $m\geq n+1.$ We define
$$ m_{j}:=\underset{i=1,\dots,s}{\max }\left \{ l_{i}\in\mathbb{N}:k_{i}(z_{0})=0^{l_{i}}  \right \}  ,$$
where $k_{i}(z)$ denotes $a(z),$ the coefficients of $R(z,f(z)),$ and their iterated terms. Since $a(z)$ and the coefficients of $R(z,f(z))$ are rational, then we can always find such a sufficiently large $\left | z_{0} \right | $ such that $m_{j}<t+n-1,$ i.e.,
the zeros of the coefficients cannot completely cancel the poles of $f(q^{n}z_{0}).$ 
In other words, we can see that $z=z_{0}$ is still a pole of $f(q^{n}z).$
Hence, we can always find poles of $f(z)$ in any neighborhood of the origin. This implies that $f(z)$ cannot be a meromorphic function, which contradicts the assumptions of Theorem \ref{T3}.
Hence, $\deg_{f}(P)>\deg_{f}(Q)=1.$
In the case where $\deg_{f}(P)=\deg_{f}(Q)+1=2,$ we can proceed in the same way and conclude that $f(z)$ is not a meromorphic function. Additionally, from \eqref{New E44} we see that $\deg_{f}(P)\leq3.$   
Consequently, if $\deg_{f}(Q)= 1,$ then $$\deg_{f}(P)=\deg_{f}(Q)+2= 3.$$

We now prove that if $\deg_{f}(Q)=0,$ then $\deg_{f}(P)\leq 2.$ Let $\deg_{f}(Q)=0.$
There are two cases that need to be discussed.
If $N(r,f)=S(r,f),$
then by (ii), we see that $\deg_{f}(P)\leq 1.$ Assume now that $N(r,f)\ne S(r,f).$ 
Therefore, there exists a point $\hat z_{0}$ such that $f(\hat z_{0})=\infty^{l}$ ($\geq 1$), but $z=\hat z_{0}$ is neither a zero nor a pole of $a(z),$ nor of any coefficient of $R(z,f(z)),$ nor of any finite iteration of $a(z)$ or the coefficients of $R(z,f(z)).$ That is, $z=\hat z_{0}$ is a generic pole of $f(z).$ 
In this case, \eqref{New E1} reduces to 
\begin{align}\label{E84} 
{f}'(z) & = a(z)f(qz)+P(z,f(z)).\end{align}
Suppose that $\deg_{f}(P)=3.$
Then $f(q\hat z_{0})=\infty^{3l}.$ 
Iterating \eqref{E84} by one step yields, 
\begin{align}\label{E85} 
{f}'(qz) & = a(qz)f(q^{2}z)+P(qz,f(qz)).\end{align}
Then $f(q^{2}\hat z_{0})=\infty^{9l}.$
Continuing to iterate \eqref{E85} yields
\begin{align}\label{E86} 
{f}'(q^{2}z) & = a(q^{2}z)f(q^{3}z)+P(q^{2}z,f(q^{2}z)).\end{align}
Then $f(q^{3}\hat z_{0})=\infty^{27l}.$ By continuing to iterate \eqref{E86}, from the above analysis we see that $f(z)$ is not a meromorphic function. This is a contradiction. That is, $\deg_{f}(P)\leq 2.$
Therefore, we can always conclude that if $\deg_{f}(Q)=0,$ then $\deg_{f}(P)\leq 2.$

We now prove that if $\left | q \right |>1 ,$ then 
$\deg_{f}(Q)= 1,\deg_{f}(P)= 3, \,\,\,\text{or}\,\,\, \deg_{f}(Q)= 0,\deg_{f}(P)\leq 2.$

By (ii) we see that if $f(z)$ is order zero, then
$\deg_{f}(Q)\leq 1.$ 
Let $\deg_{f}(Q)=0.$
Then \eqref{New E1} becomes 
\begin{align}\label{New E70} 
{f}'(z)=a(z)f(qz)+P(z,f(z)). 
\end{align}
If $N(r,f(z))=S(r,f(z)),$ then by (ii) we find that $\deg_{f}(P)\leq 1.$ We now consider the case in which $N(r,f(z))\ne S(r,f(z)).$ By \eqref{New E44} we see that $\deg_{f}(P)\leq 3.$ 
Let $\deg_{f}(P)=3,$ and let $f(z)$ have a generic pole of order $k$ at $z=\widetilde{z}.$
Then $f' (\widetilde{z} )=\infty^{k+1}$ and $f (q\widetilde{z} )=\infty^{3k},$ and the starting point of iterative sequence $z=\widetilde{z} $ satisfy that $\left | \widetilde{z} \right | $ is large enough
and lies outside the closed disk $\overline D$. 
Note that  $\left | q \right |>1.$ Therefore, the zeros and poles of $a(z),$ the coefficients of $P(z,f(z)),$ and their $n$ times iterated terms do not influence the number of poles of $f(q^{n}z)$ ($n\geq 1$) in the iteration of \eqref{New E70}.
By iterating \eqref{New E70} one step, it follows that \begin{align}\label{New E71} 
{f}'(qz)=a(qz)f(q^{2}z)+P(qz,f(qz)). 
\end{align}
Then $f' (q\widetilde{z} )=\infty^{3k+1}$ and $f (q^{2}\widetilde{z} )=\infty^{9k}.$
By iterating \eqref{New E70} more steps, we have
$f(q^{n}z)=\infty ^{3^{n}k}.$ 
Therefore, 
\begin{align}\label{E87}
\rho (f)\geq \lambda \left ( \frac{1}{f}  \right )&=\limsup_{r \to \infty}\frac{\log n(r,f)}{\log r}\\\nonumber
&\geq \limsup_{n \to \infty}\frac{\log  (3^{n}k)} {\log \left | q^{n}\hat z \right | }=\frac{\log 3 }{\log \left | q \right |} >0.
\end{align}
This is a contradiction with our assumption. Hence, $\deg_{f}(P)\leq 2.$
Let $\deg_{f}(Q)=1.$ Then \eqref{New E1} becomes
\begin{align}\label{E75}
{f}'(z) & = a(z)f(qz)+\frac{P(z,f(z))}{f(z)-b(z)},  
\end{align}
where $b(z)$ is a rational function.
If $\deg_{f}(P)\leq 1,$ then by Lemma~\ref{L6},  there exists a generic zero of $f(z)-b(z)$
of order $k$ at $z=z_{1}.$ On the other hand,
note that $a(z)$ and the coefficients of $R(z,f(z))$ are rational, then
for a non-generic zero $z=\hat z_{0}$ of $f(z)-b(z),$  we see that $z=\hat z_{0}$ must be a interior point of a large bounded disc $D(0,R)$ and  
\begin{align}\label{E78}
n\left ( R,\frac{1}{f(z)-b(z)}   \right ) =O(1).    
\end{align}
Then $f(qz_{1})=\infty^{k}.$ Iterating \eqref{E75} by one step yields,
\begin{align}\label{E76}
{f}'(qz) = a(qz)f(q^{2}z)+\frac{P(qz,f(qz))}{f(qz)-b(qz)},  
\end{align}
Then $f(q^{2}z_{1})=\infty^{k+1}.$
Continuing to iterate \eqref{E76} yields
\begin{align}\label{E77}
{f}'(q^{2}z) = a(q^{2}z)f(q^{3}z)+\frac{P(q^{2}z,f(q^{2}z))}{f(q^{2}z)-b(q^{2}z)}.  
\end{align}
Then $f (q^{3}z_{1})=\infty^{k+2}.$ Then 
\begin{align}\label{E79}
 \frac{n\left ( r,\frac{1}{f(z)-b(z)}  \right ) }{n(\left | q \right |^{2}r,f(z))}=\frac{n\left ( r,\frac{1}{f(z)-b(z)}  \right ) }{n(r,f(q^{2} z))}\leq \frac{1}{2}+\epsilon   
\end{align}
for all $r\geq r_0,$ where $r_0$ is large enough, and $\epsilon$ is a arbitrarily positive constant.
Together with \eqref{E78} and \eqref{E79} imply that for every zero of $f(z)-b(z),$ we have the following:\begin{align*}
n\left ( r,\frac{1}{f(z)-b(z)}  \right )\leq\frac{1}{2}n(\left | q \right |^2 r,f(z))+O(1).      
\end{align*}
By Lemma~\ref{New L7}, we obtain that $\rho (f)>0.$
This is a contradiction with our assumption. Hence, $\deg_{f}(P)\geq 2.$ If $\deg_{f}(P)=2,$ then we can proceed with the same way to obtain a contradiction. Together, \eqref{New E44} implies that $\deg_{f}(P)=3.$

(b) From \eqref{New E44} we see that there are two cases that need to be discussed. 

We first consider the case where $\deg_{f}(P)=3,$ $\deg_{f}(Q)=1.$ In this case, \eqref{New E1} becomes 
\begin{align}\label{E88}
{f}'(z) = a(z)f(qz)+\frac{b_{0}(z)f(z)^{3}+b_{1}(z)f(z)^{2}+b_{2}(z)f(z)+b_{3}(z)}{f(z)-b_{4}(z)}, 
\end{align}
where $b_{0}(z)\not\equiv 0,$  $b_{3}(z)$ and $b_{4}(z)$ are not simultaneously identically zero. Equation \eqref{E88} can be written as follows:
\begin{align}\label{E89}
f(z)\left [ b_{0}(z)f(z)^{2}+b_{1}(z)f(z)+b_{2}(z) \right ] & = ({f}'(z)-a(z)f(qz) )(f(z)-b_{4}(z))-b_{3}(z). 
\end{align}
Applying Lemma~\ref{New L5} to \eqref{E89} yields $m(r,f(z))=S(r,f(z))$ on a set of logarithmic density one. This means \begin{align}\label{New E92}
N(r,f(z))=T(r,f(z))+S(r,f(z))
\end{align}
on a set of logarithmic density one.

If $\deg_{f}(P)=2,$ $\deg_{f}(Q)=0,$ then \eqref{New E1} becomes \begin{align}\label{E91}
{f}'(z)=a(z)f(qz)+c_{0}(z)f(z)^{2}+c_{1}(z)f(z)+c_{2}(z),   \end{align}
where $c_{0}(z)\not\equiv0.$ Equation \eqref{E91} can be written as follows:
\begin{align}\label{E92} 
f(z)(c_{0}(z)f(z)+c_{1}(z)) & = {f}'(z)-a(z)f(qz)-c_{2}(z). 
\end{align}
Applying Lemma~\ref{New L5} to \eqref{E92} yields $m(r,f(z))=S(r,f(z))$ on a set of logarithmic
density one. We deduce that
\begin{align}\label{E94} 
N(r,f(z))=T(r,f(z))+S(r,f(z))
\end{align}
on a set of logarithmic density one.

We have proved that $N(r, f(z)) =T(r, f) + S(r, f(z))$ holds on a set of logarithmic density one whenever $\deg_f(P) - \deg_f(Q) = 2.$ Additionally, since $a(z)$ and all the coefficients of $R(z, f(z))$ are rational, there exists a generic pole of $f(z)$ of order
$t$ such that $f(\hat z)=\infty^{t}$ and $z=\hat z$ is neither a zero nor a pole of $a(z)$ nor of any
coefficient of $R(z, f(z)).$ Then ${f}'(\hat z)=\infty^{t+1} $ and $f(q\hat z) = \infty^{2t}
.$ We now assume
that $t \geq 2.$ By iterating \eqref{New E1} one step, it follows that
\begin{align}\label{E95}
{f}'(qz) & = a(qz) f(q^{2}z)+\frac{P(qz,f(qz))}{Q(qz,f(qz))} 
\end{align}
Then ${f}'(q\hat z)=\infty^{2t+1}$ and $f(q^{2}\hat z)=\infty^{4t}.$ By iterating \eqref{New E1} for further steps, we can end up with $f(q^{n}\hat z)=\infty^{2^{n}t}.$ From the above analysis, we can see that if $\left | q \right |>1,$ then we can obtain that the growth order of $f(z)$ is positive; whereas if $0<\left | q \right |<1,$ then $f(z)$ is not a meromorphic function. Therefore, we can always obtain a contradiction. Then $t=1.$ That is,
\begin{align}\label{E96}
 \overline{N}(r,f(z))=T(r,f(z))+S(r,f(z)).    
\end{align}

\end{proof}

\section{proof of Theorem \ref{T1}}
Firstly, we present the following propositions that will be used to the proof.
\begin{proposition}\label{P1}\cite{gundersen2002meromorphic}
If $\left |q\right|>1,$ then any local meromorphic solution around the origin of   \begin{align}\label{E11}
f(qz)=\frac{\sum_{j=0}^{p}a_{j}(z)f(z)^{j}}{\sum_{j=0}^{q}b_{j}(z)f(z)^{j}},     
\end{align} where $a_{j}(z)$ and $b_{j}(z)$ are meromorphic functions, 
has a meromorphic continuation over the whole complex plane.  

\end{proposition}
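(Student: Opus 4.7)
The plan is to exploit the fact that when $|q| > 1$, the map $z \mapsto qz$ strictly enlarges disks centred at the origin, so the equation \eqref{E11} --- read backwards --- can be used to propagate a local meromorphic solution outward, one expansion at a time. After substituting $z \mapsto z/q$ in \eqref{E11}, the equation becomes
\[
f(z) = \frac{\sum_{j=0}^{p} a_{j}(z/q)\, f(z/q)^{j}}{\sum_{j=0}^{q} b_{j}(z/q)\, f(z/q)^{j}},
\]
which expresses $f$ on the disk $\{|z| < |q|r\}$ in terms of the (already known) values of $f$ on $\{|z|<r\}$, together with the globally meromorphic coefficients $a_{j}$ and $b_{j}$.

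First I would fix $r_{0} > 0$ such that the given local solution $f$ is meromorphic on $D_{r_{0}} := \{|z| < r_{0}\}$ and satisfies the equation there. For $z \in D_{|q|r_{0}}$ I then define $\tilde f(z)$ by the right-hand side of the displayed formula; since $z/q \in D_{r_{0}}$, both $f(z/q)$ and the shifted coefficients $a_{j}(z/q), b_{j}(z/q)$ are meromorphic functions of $z \in D_{|q|r_{0}}$, so $\tilde f$ is meromorphic on $D_{|q|r_{0}}$ provided its denominator does not vanish identically. On the sub-disk $D_{r_{0}} \subset D_{|q|r_{0}}$ the original equation forces $\tilde f \equiv f$, so $\tilde f$ is a genuine meromorphic extension. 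Iterating $n$ times yields a meromorphic function on $D_{|q|^{n} r_{0}}$, and since $|q| > 1$ implies $|q|^{n} r_{0} \to \infty$, these extensions glue, by the identity theorem, into a meromorphic function on all of $\mathbb{C}$.

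The step that requires the most care is verifying that the denominator $\sum_{j} b_{j}(z/q) f(z/q)^{j}$ never becomes identically zero on a disk during the iteration. This would first have to fail on $D_{r_{0}}$ itself, but there it cannot: otherwise the right-hand side of the original equation would be an indeterminate expression while $f(qz)$ has an honest meromorphic value, a contradiction. Hence $\sum_{j} b_{j}(z)f(z)^{j} \not\equiv 0$ on $D_{r_{0}}$, and by the identity theorem this non-vanishing persists under each subsequent meromorphic extension. Since at every step $\tilde f$ is a rational expression in already meromorphic data, no essential singularities can be introduced, and the construction produces a genuine meromorphic continuation of $f$ to the whole complex plane.
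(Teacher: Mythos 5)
Your proposal is correct and follows essentially the same argument as the paper's own proof of the analogous Proposition~\ref{P2}: use the equation to express $f$ on the disk $\{|z|<|q|r_0\}$ in terms of its values on $\{|z|<r_0\}$, and iterate, using $|q|>1$ so that the disks exhaust $\mathbb{C}$. Your additional check that the denominator cannot vanish identically (and that this persists under extension by the identity theorem) is a sensible point of care for the rational form of \eqref{E11}, but the overall route is the same.
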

Following the proof of  Proposition~\ref{P1} in \cite{gundersen2002meromorphic}, we find Proposition~\ref{P1} still holds if we replace \eqref{E11} with \eqref{E1}. We give the details for the convenience of the reader.
Moreover, in the case $0<\left|q\right|<1,$ we can replace $z$ with $\frac{1}{q}z$ to obtain the following:
\begin{proposition}\label{P2}
If $\left |q\right|\ne 0, 1,$ then any local meromorphic solution of \eqref{E1} around the origin has a meromorphic continuation over the whole complex plane.  
\end{proposition}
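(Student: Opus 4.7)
The plan is to extend any local meromorphic solution $f$ of \eqref{E1} from a disk $D(0,r)$ about the origin to strictly larger disks, and to iterate the construction so as to exhaust $\mathbb{C}$. I would split the argument according to whether $|q|>1$ or $0<|q|<1$.

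In the case $|q|>1$, the proof of Proposition~\ref{P1} given in \cite{gundersen2002meromorphic} adapts essentially verbatim. Since $A\neq 0$, I would solve \eqref{E1} for $f(qz)$:
\[f(qz)=\frac{1}{A}\bigl(f'(z)-Bf(z)^{2}-Cf(z)-D\bigr).\]
If $f$ is meromorphic on $D(0,r)$, so is the right-hand side; setting $w=qz$ gives meromorphy of $f$ on $D(0,|q|r)$, which is strictly larger. Iterating with radii $r,|q|r,|q|^{2}r,\dots$ covers $\mathbb{C}$.

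In the case $0<|q|<1$, replacing $z$ by $z/q$ in \eqref{E1} yields the equivalent relation
\[f(z)=\frac{1}{A}\bigl(f'(z/q)-Bf(z/q)^{2}-Cf(z/q)-D\bigr),\]
but this algebraic rearrangement by itself sends the disk of known values to a smaller one. To genuinely enlarge the domain of meromorphy, I would reinterpret \eqref{E1} as a Riccati first-order ODE in $f(z)$,
\[f'(z)=Bf(z)^{2}+Cf(z)+\bigl(D+Af(qz)\bigr),\]
whose inhomogeneous term $D+Af(qz)$ is meromorphic on $D(0,r/|q|)$ as soon as $f$ is meromorphic on $D(0,r)$. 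The classical Painlev\'e property of the Riccati equation --- meromorphic coefficients force all singularities of local solutions to be movable simple poles --- then extends the given local solution meromorphically to the whole disk $D(0,r/|q|)$. Since $1/|q|>1$, iterating with radii $r,r/|q|,r/|q|^{2},\dots$ covers $\mathbb{C}$.

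The main obstacle will be the $0<|q|<1$ case: one must check that the meromorphic continuation obtained via the Riccati--Painlev\'e mechanism agrees with the given local solution on the overlap (so that the iteration is unambiguous and really yields a single meromorphic function on $\mathbb{C}$), and that poles of $f$ do not accumulate in such a way as to obstruct continuation to arbitrarily large radii. Once these two points are verified, the rest of the argument closely parallels Gundersen's proof of Proposition~\ref{P1}, and no further analytic obstruction is expected.
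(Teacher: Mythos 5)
Your treatment of the case $|q|>1$ is exactly the paper's argument (solve \eqref{E1} for $f(qz)$ and iterate over the discs $|z|<|q|^{j}R$), and that half is fine. For $0<|q|<1$ the paper's own proof merely substitutes $z\mapsto z/q$ to obtain \eqref{E101} and asserts that the extension to $|z|<R/|q|$ ``follows''; you have correctly observed that the algebraic rearrangement alone only recovers $f$ on a smaller disc, and your reformulation of \eqref{E1} as a Riccati equation $f'=Bf^{2}+Cf+\bigl(D+Af(qz)\bigr)$, whose forcing term is meromorphic on the larger disc $|z|<R/|q|$, is the right mechanism and is more explicit than what the paper writes.

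However, the appeal to the Painlev\'e property does not close the argument. That property controls \emph{movable} singularities: it guarantees single-valued meromorphic continuation of a local solution only across points where the coefficients are analytic. The annulus $R\le|z|<R/|q|$ into which you are extending contains \emph{fixed} singularities of your Riccati equation, namely the points $z_{1}=p/q$ for every pole $p$ of $f$ with $|q|R\le|p|<R$, at which the forcing term $D+Af(qz)$ has a pole. At such a point the standard linearization $f=-u'/(Bu)$ with $u''-Cu'+B\bigl(D+Af(qz)\bigr)u=0$ has a singular point of the linear equation, and its solutions can carry logarithmic (or worse) branching, so $-u'/(Bu)$ need not continue meromorphically through $z_{1}$. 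This is precisely where the real work lies: one needs a separate local argument at each such $z_{1}$ --- for instance a Laurent-series ansatz with a convergence estimate of the type carried out in the proof of Theorem~\ref{T1}(iv)(b), together with a uniqueness statement matching that local solution to the continuation from the punctured neighbourhood. The two obstacles you flag (agreement on overlaps and non-accumulation of poles) are comparatively minor; the fixed-singularity issue is the gap that must be filled before the iteration over the radii $R/|q|^{j}$ can be run.
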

\begin{proof}
Suppose that $\left | q \right |>1   .$
Let $f(z)$ be a meromorphic solution of \eqref{E1} in $\left | z \right |<R $ for some $0<R<\infty .$ From \eqref{E1}, $f(z)$ extends meromorphically to the larger disc $\left | z \right |<\left | q \right |R  .$  Inductively, it follows that $f(z)$ can be continued meromorphically to $$\bigcup_{j=0}^{\infty}\left \{ \left | z \right |<\left | q \right |^{j}R   \right \}=\mathbb{C}.$$ 
Suppose now that $0<\left | q \right |<1.$ Then substituting $\frac{z}{q} $ for $z$ in \eqref{E1} gives \begin{align}\label{E101}
{f}'\left ( \frac{z}{q}  \right )=Af(z)+Bf\left ( \frac{z}{q}  \right )^2 +Cf\left ( \frac{z}{q}  \right )+D.       
\end{align}
Since $f(z)$ is meromorphic in $\left | z \right |<R$ with $0<R<\infty ,$ it follows from \eqref{E101} that  $f(z)$ can be extended meromorphically to the larger disc $\left | z \right |<\frac{1}{\left | q \right | }R  .$ Inductively, $f(z)$ can be extended meromorphically to 
$$\bigcup_{j=0}^{\infty}\left \{ \left | z \right |<\left | \frac{1}{q}  \right |^{j}R   \right \}=\mathbb{C}.$$ 
\end{proof}

\begin{proof}[Proof of Theorem \ref{T1}]
 Let $f(z)=\frac{g(z)}{B}.$ Then \eqref{E1} becomes
\begin{align}\label{E19}
{g}'(z)=Ag(qz)+g(z)^{2}+Cg(z)+BD.      
\end{align}
In order to prove the existence of entire solutions and meromorphic solutions of \eqref{E1}, 
we only need to prove that \eqref{E19} has  entire solutions and meromorphic solutions.
To this end, consider first a formal power series 
\begin{align}\label{E2}
g(z)=\sum_{n=0}^{\infty}a_{n}z^{n}.    
\end{align}
Then \begin{align}\label{E17}
{g}'(z)= \sum_{n= 1}^{\infty}na_{n}z^{n-1}, g(qz)=\sum_{n=0}^{\infty}a_{n}q^{n} z^{n},g(z)^{2}= \sum_{n=0}^{\infty}\left(\sum_{i,j=0}^{i+j=n}a_{i}a_{j}\right)z^{n}.       
\end{align}
By substituting \eqref{E2} and \eqref{E17} into \eqref{E19}, it follows that \begin{align}\label{E18}
&a_{1}= a_{0}^{2}+(A+C)a_{0}+BD,  \\\nonumber  
&2a_{2}=Aa_{1}q+a_{0}a_{1}+a_{1}a_{0}+Ca_{1},\\\nonumber
&3a_{3}=Aa_{2}q^{2}+a_{0}a_{2}+a_{1}^{2}+a_{2}a_{0}+Ca_{2},\\\nonumber
&\dots\\\nonumber 
&(n+1)a_{n+1}=Aa_{n}q^{n}+\sum_{i,j=0}^{i+j=n}a_{i}a_{j}+Ca_{n},\,\,\,n\geq 1.    
\end{align}
 We next consider the two cases where $D=0$ and where $D\ne 0.$

(i). $\mathbf{Case}$ $\mathbf{1}:$ Suppose that $D=0.$ Combining \eqref{E18} with the fact that $A,$ $C,$ and $q$ are fixed constants, we have $a_{n}$ is a polynomial in $a_{0}$ for all $n\ge 0.$ Note that for arbitrary $n\in \mathbb{N},$ $a_n$
forms a one-parameter family determined by $a_0.$ We will prove that there exist uncountably many $a_0$
such that \eqref{E2} converges to a transcendental entire function. That is, \eqref{E19} has uncountably many
transcendental entire solutions.

$\mathbf{Step}$ $\mathbf{1}:$
Let $g(z)\not\equiv0.$
We first prove that there exist uncountably many $a_{0}$ such that each such $a_0$  satisfies that there are infinitely many indices $n$ for which $a_{n}=P_{n}(a_{0})\ne 0.$ 
Let $$A_n=\left \{ a_0\in\mathbb{C}:a_n=P_n(a_0)=0  \right \} ,$$ and let $$A=\bigcup_{n=0}^{\infty}A_n .$$
Define $M=\mathbb{C}\setminus A .$ Note that each $A_n$ is a finite set, and $A$ is the union of countably many finite sets, we see that $A$ is an countable set. Then $M$ is an uncountably set. By the definition of set $M,$ we have $a_n=P_n(a_0)\ne 0$
for every $a_0\in M$ and each $n\in \mathbb{N}\cup \left \{ 0 \right \} .$

$\mathbf{Step}$ $\mathbf{2}:$
Next, we prove that \eqref{E2} is a convergent power series, with $a_{0}\in \mathbb{C}$ fixed so that $g(z)$ has infinitely many nonzero coefficients $a_{n},$ where the existence of such an $a_{0}$ was established in Step $1.$ 

$\mathbf{Step}$ $\mathbf{2.1}:$
Let $a_{0}=\beta.$  By \eqref{E18}, we conclude that $\beta=a_{0}\ne0;$ Otherwise, $a_{n}\equiv 0$ for all $n\in \mathbb{N}\cup\left \{ 0 \right \},$ which is a contradiction with Step $1.$
 
We now consider two cases for $\left |\beta  \right |$ where $0<\left |\beta  \right |=\left | a_{0} \right | \leq 1$ and $\left |\beta  \right |=\left | a_{0} \right | >1.$

$\mathbf{Step}$ $\mathbf{2.2}:$
We will now prove an inequality for $|a_n|$ using induction.
If $0<\left | a_{0} \right |= \left | \beta  \right |\leq 1,$ then we suppose that 
\begin{align}\label{E20}
\left | a_{n} \right |\leq (\left | A \right |+\left | C \right |+1  )^{n},\,\,\,\forall n\leq k, \,\,\,k\in \mathbb{N}\cup \left \{ 0 \right \}.   
\end{align}
It is easily seen that \eqref{E20} holds for $n=0.$ Hence, we can assume $k\geq 1.$ We now prove that \eqref{E20} still holds when $n=k+1.$
By \eqref{E18} we have \begin{align}\label{E51}
a_{k+1}=\frac{1}{k+1}\left [ Aa_{k}q^{k}+\sum_{i,j=0}^{i+j=k}a_{i}a_{j}+Ca_{k}   \right ].    
\end{align}
Combining \eqref{E20}, \eqref{E51}, and $0<\left | q \right |<1 ,$ we have
\begin{align}\label{E21}
&\left | a_{k+1} \right |\\\nonumber
\leq& \frac{1}{k+1}\left [\left | A \right | \left | q \right |^{k}(\left | A \right |+\left | C \right |+1)^{k}+(k+1)(\left | A \right |+\left | C \right |+1)^{k}+\left | C \right | (\left | A \right |+\left | C \right |+1)^{k}\right ]\\\nonumber
=&(\left | A \right |+\left | C \right |+1)^{k}\left [ \frac{\left | A \right |\left | q \right |^{k}}{k+1}+1+\frac{\left | C \right | }{k+1}\right ]\\\nonumber
<&(\left | A \right |+\left | C \right |+1)^{k}(\left | A \right |+\left | C \right |+1)\\\nonumber
=&(\left | A \right |+\left | C \right |+1)^{k+1}      
\end{align} 
Hence, by mathematical induction, we obtain that \begin{align}\label{E22}
 \left | a_{n} \right |\leq \left (\left | A \right |+\left | C \right |+1\right )^{n} 
\end{align}
for all $n\in \mathbb{N} \cup\left \{ 0 \right \}.$ Then the radius $$R=\frac{1}{\limsup_{n \to \infty} \sqrt[n]{\left | a_{n} \right | } } \geq\frac{1}{\left | A \right |+\left | C \right |+1  }>0 $$ of convergence of $g(z)$ 
Combining Proposition~\ref{P2}, it follows that $g(z)$ is an entire function in the complex plane. Note that $g(z)$ is not a polynomial, then $g(z)$ is a transcendental entire solution of \eqref{E19}. 

$\mathbf{Step}$ $\mathbf{2.3}:$ 
If $\left | a_{0} \right | $=$\left |\beta   \right |>1.$ Suppose that \begin{align}\label{E45}
\left | a_{n} \right |\leq \left |\beta  \right |^{n+1}\left ( \left | A \right |+\left | C \right |+\left | \beta \right | \right )^{n}, \,\,\,\forall n\leq k.
\end{align}
Then the assumption holds whenever $n=0.$ We next prove that \eqref{E45} also holds for $n=k+1.$
Combining \eqref{E51} with \eqref{E45}, it follows that 
\begin{align}\label{E48}
&\left | a_{k+1} \right |\\\nonumber
\leq &\frac{1}{k+1} \left | \beta  \right |^{k+1}(\left | A \right |+\left | C \right |+\left | \beta \right |  )^{k}\left [ \left | A \right |+(k+1)\left | \beta  \right |+\left | C \right |   \right ]\\\nonumber
<&(\left | A \right |+\left | C \right |+\left | \beta \right |  )^{k}\left | \beta  \right |^{k+1}(\left | A \right |+\left | \beta \right |  +\left | C \right |)\\\nonumber
\leq &(\left | A \right |+\left | C \right |+\left | \beta \right |  )^{k+1}\left | \beta  \right |^{k+2}      
\end{align} 
Hence, by mathematical induction, we obtain that \begin{align*}
 \left | a_{n} \right |\leq \left |\beta  \right |^{n+1}\left ( \left | A \right |+\left | C \right |+\left | \beta \right | \right )^{n}, 
\end{align*}
for all $n\in \mathbb{N} \cup\left \{ 0 \right \}.$ By the same way as Step $2.2,$ we obtain that $g(z)$ is a transcendental entire solution of \eqref{E19}.

(ii). $\mathbf{Case}$ $\mathbf{2}:$ Suppose now that $D\ne 0.$

$\mathbf{Case}$ $\mathbf{2.1}:$ Let $a_{0}=0$ in \eqref{E2}, which is equivalent to $f(0)=0.$ Note that $A,$ $B,$ $C,$ $D$ and $q$ are all fixed constants, then by \eqref{E18}, we see that all coefficients $a_{n}$ of \eqref{E2} are fixed constants, and $a_{1}=BD\ne 0.$ 
In other words, \eqref{E2} is a fixed power series and
\begin{align}\label{E50}
g(z)=BDz+\sum_{n=2}^{\infty}a_{n}z^{n}.    
\end{align}
There are two cases for the value of $\left | BD \right |.$
That is, $\left | BD \right |>1 $ and $0<\left | BD \right |\leq 1.$

If $0<\left | BD \right |\leq 1,$ then $\left | a_{1} \right |=\left | BD \right |\leq \left ( \left | A \right |+\left | C \right |+1  \right ).$
Suppose now that \begin{align}\label{E49}
\left | a_{n} \right |\leq \left ( \left | A \right |+ \left | C \right |+1  \right )^{n}, \forall n\leq k.
\end{align}
Combining \eqref{E51}, \eqref{E49}, and $0<\left | q \right |<1,$
we have
\begin{align*}
&\left | a_{k+1} \right |\\\nonumber
\leq& \frac{1}{k+1}\left [ \left | A \right |\left | a_{k} \right |+\sum_{i,j=0}^{i+j=k}\left | a_{i}a_{j} \right |+\left | C \right |\left | a_{k} \right | \right ]\\\nonumber
 \leq&\frac{1}{k+1}\left [ \left | A \right |\left (\left | A \right |+\left | C\right | +1 \right )^{k}+(k+1)\left (\left | A \right |+\left | C\right | +1 \right )^{k}+\left | C \right | \left (\left | A \right |+ \left |C\right | +1 \right )^{k}\right ] \\\nonumber  
 \leq& \left (\left | A \right |+\left | C\right | +1 \right )^{k+1}.
\end{align*}

If $\left | BD \right |>1,$ then $\left | a_{1} \right |=\left | BD \right |\leq \left | BD \right |(\left | A \right |+\left | C \right |+1 ).$ Suppose that \begin{align}\label{E90}
 \left | a_{n} \right |\leq \left | BD \right |^{n}\left ( \left | A \right |+ \left | C \right |+1 \right )^{n}, \forall n\leq k.   
\end{align}
Together,  \eqref{E51}, \eqref{E90}, and $0<\left | q \right |<1 $ yield
\begin{align*}
&\left | a_{k+1} \right |\leq \frac{1}{k+1}\left [ \left | A \right |\left | a_{k} \right |+\sum_{i,j= 0}^{i+j = k}\left | a_{i}a_{j} \right |+\left | C \right |\left | a_{k} \right | \right ]\\\nonumber
 \leq &\left | BD \right |^{k}(\left | A \right |+\left | C \right |+1 )^{k}\left [ \frac{\left | A \right | }{k+1}+1+\frac{\left | C \right | }{k+1} \right ] \\\nonumber  
\leq &\left | BD \right |^{k} (\left | A \right |+\left | C \right |+1 )^{k+1}  \\\nonumber
\leq &\left | BD \right |^{k+1} (\left | A \right |+\left | C \right |+1 )^{k+1}  
\end{align*}
By the proof of Step $2.2,$ it follows that \eqref{E50} is an entire solution of \eqref{E19} regardless of whether $\left | BD \right |>1$ or $0<\left | BD \right |\leq1.$ We now prove that \eqref{E50} must be a transcendental entire solution of \eqref{E19}. Assume, on the contrary, that \eqref{E50} is a polynomial in $z$ with $\deg_z g(z)=G.$ 
From \eqref{E50}, we see that  $\deg_z g(z)=G\geq 1.$  
By comparing the degrees of the two sides of \eqref{E19}, we obtain a contradiction. The assertion follows.

$\mathbf{Case}$ $\mathbf{2.2}:$ Let $a_{0}\ne 0$ in \eqref{E2}, which is equivalent to $f(0)\ne 0.$ Since 
$A,$ $B,$ $C,$ $D,$ and $q$ are fixed constants, it follows from
\eqref{E18} that all coefficients $a_{n}$ of \eqref{E2} are polynomials in $a_{0}.$ Note that $a_{0}\in \mathbb{C}\setminus \left \{ 0 \right \}$ is not fixed. Referring to Step $1$ of Case $1,$ we can first prove that there exist uncountably many $a_{0}$ such that each such $a_{0}$ ensure that \eqref{E2} has infinitely many nonzero coefficients $a_{n}.$ Then by fixing this value of $a_{0},$ we use mathematical induction to prove that $g(z)$ is an entire solution.
This process is  exactly the same case as in Case $1,$ we will omit its proof here. In this case, we can obtain $g(z)$ is a transcendental entire solution.

(iii) Suppose that the exponential polynomial \eqref{E37} is an entire solution of \eqref{E1}.

According to the form of \eqref{E37}, we can easily obtain the following result:\begin{align}\label{E38}
 f(z)^{2} & = \sum_{i,j= 0}^{m}H_{i}(z)H_{j}(z)e^{(\omega_{i}+\omega_{j})z^{t}},\\\nonumber
f(qz) &=\sum_{l=0}^{m}H_{l}(qz)e^{k_{l}z^{t}},\\\nonumber
{f}'(z) &=\sum_{s=0}^{m}\eta_{s}(z)e^{\omega_{s}z^{t}},\end{align}
where $\omega_{0}=0,k_{l}=\omega_{l}q^{t},$ $\eta_{0}(z)=H'_{0}(z),$ $\eta_{s}(z)=H'_{s}(z)+H_{s}(z)\omega_{s}tz^{t-1}$ and $\omega_{i}$ $(1\leq i\leq m)\in \mathbb{C}.$ Substituting \eqref{E37} and \eqref{E38} into \eqref{E1}, we have 
\begin{align}\label{E39}
&\sum_{s= 0}^{m}\eta_{s}(z)e^{\omega_{s}z^{t}}\\\nonumber
=& A\sum_{l = 0}^{m}H_{l}(qz)e^{k_{l}z^{t}}+B\sum_{i,j= 0}^{m}H_{i}(z)H_{j}(z)e^{(\omega_{i}+\omega_{j})z^{t}}+C\sum_{h= 0}^{m}H_{h}(z)e^{\omega_{h}z^{t}}+D.          
\end{align}
Note that $\omega_{1},\dots,\omega_{m}$ are pairwise different constants and $0<\left | q \right |<1 $. Let $\left | \omega_{1} \right |<\left |\omega_{2} \right |<\dots\left |\omega_{m} \right |.$
By applying Corollary~\ref{C1} to \eqref{E39}, it follows that $H_{m}(z)\equiv 0.$ This contradicts the fact that $H_{j}(z)\not\equiv 0$ for all $1\leq j\leq m.$ Therefore, the assertion follows.

(iv) (a) Let $z=z_{0}$ be  any pole of $f(z),$ and let
$f(z_{0})=\infty^{t}$ with $t\ge 2.$ If $z_{0}=0,$ by \eqref{E1}, it follows that 
$${f}'(0)=Af(0)+Bf(0)^{2}+Cf(0)+D.$$ This is a contradiction with $f(0)=\infty^{t},t\ge 2.$
Therefore, $z_{0}\ne 0.$
From \eqref{E1}, $f(z)$ must have a pole of order $2t$ at $z=qz_{0}.$
Continuing this process, we have $f(z)$ has a pole of order $4t$ at $z=q^{2}z_{0},$ and inductively, we have $f(z)$ has a pole of order $2^{n}t$ at $z=q^{n}z_{0}.$ 
Note that
$0<\left|q\right|<1,$ then $q^{n}z_{0}\longrightarrow 0 $ as $n\longrightarrow\infty,$ which means the origin is the limit point of poles of $f(z).$
This contradicts the fact that $f(z)$ is a meromorphic function. Therefore, $t=1.$

Next, we prove that \eqref{E19} admits  both a global and a local meromorphic solution. According to Proposition~\ref{P2}, if we can prove that \eqref{E19} has a local meromorphic solution around $z=0,$
then we can extend the solution to the entire complex plane. We will consider the  Laurent series expansion of $g(z)$ in a neighborhood of $z=0$ and $z\ne0.$
To this end, consider a formal power series 
\begin{align}
\label{E3} g(z)=b_{-1}z^{-1}+ \sum_{n=0}^{\infty}b_{n}z^{n}=b_{-1}z^{-1}+l(z), \quad b_{-1}\ne 0,     
\end{align}
where $l(z)= \sum_{n=0}^{\infty}b_{n}z^{n}.$     
Then
\begin{align}\label{E15} 
&{g}'(z)=\frac{-b_{-1}}{z^{2}}+\sum_{n=1}^{\infty}nb_{n}z^{n-1},\\\nonumber
&g(qz)=\frac{b_{-1}}{qz}+\sum_{n=0}^{\infty}b_{n}q^{n}z^{n},\\\nonumber
&g(z)^{2}=\frac{b_{-1}^{2}}{z^{2}}+\sum_{n=0}^{\infty}\left( \sum_{i,j=0}^{i+j=n}b_{i}b_{j}\right)z^{n}+\sum_{n=0}^{\infty}2b_{-1}b_{n}z^{n-1}
 \end{align}
Substituting \eqref{E3}, \eqref{E15} into \eqref{E19}, we have \begin{align}\label{E16}
&-b_{-1}=b_{-1}^{2},\\\nonumber
&Ab_{-1}q^{-1}+2b_{-1}b_{0}+Cb_{-1}=0,\\\nonumber 
&b_{1}=Ab_{0}+b_{0}^{2}+Cb_{0}+2b_{-1}b_{1}+BD,\\\nonumber
&\dots\\\nonumber 
&  (n+1)b_{n+1}=Ab_{n}q^{n}+\sum_{i,j=0}^{i+j=n}b_{i}b_{j}+Cb_{n}+2b_{-1}b_{n+1}\quad n\geq 1.           \end{align}
From \eqref{E16}, a simple calculation  obtains that 
\begin{align}\label{E23}
&b_{-1}=-1,\\\nonumber
&b_{0}=-\frac{1}{2}\left(\frac{A}{q}+C\right),\\\nonumber
&\dots\\\nonumber
&(n+1)b_{n+1}=b_{n}(Aq^{n}+C)+2b_{n+1}b_{-1}+\sum_{i,j=0}^{i+j=n}b_{i}b_{j}.
\end{align}
Since $A,$ $B,$ $C,$ $D$ and $q$ are all fixed constants, we conclude that all coefficients of \eqref{E3} are specific constants, and they depend on $A,$ $B,$  $C,$ $D$ and $q.$
In order to prove that \eqref{E3} is a convergent power series, we only need to prove that $l(z)=\sum_{n=0}^{\infty}b_{n}z^{n}$ is convergent. By \eqref{E23}, we have
\begin{align}\label{E52}
 b_{n+1}=\frac{1}{n+1}\left [ Ab_{n}q^{n}+\sum_{i,j=0}^{i+j=n}b_{i}b_{j}+Cb_{n}+2b_{-1}b_{n+1}\right ],  n\geq 1.    
 \end{align}
 Let $b_{0}\ne 0.$ Then there are two cases in which $0<\left | b_{0} \right |<1 $ and $\left | b_{0} \right |>1 .$  
 This case is similar to Step $2.2$ and Step $2.3.$ That is, we can use mathematical induction to prove that $l(z)=\sum_{n=0}^{\infty}b_{n}z^{n}$ is an entire function. 
Therefore,
we conclude that \eqref{E3} is a specific global meromorphic solution to \eqref{E19}.

If $b_{0}=0,$ which is equivalent to $C=-\frac{A}{q} ,$ then it follows from \eqref{E16} that $b_{1}=\frac{1}{3}BD .$ 
If $D\ne 0,$ then $b_{1}\ne 0.$ That is, $b_{1}$ is the first nonzero term of the coefficients $\left \{ b_{n} \right \}_{n=0}^{\infty}  .$ 
Let $b_{1}=\gamma\ne 0.$  We can prove that $\left | b_{n}  \right |\leq 2^{n}\left | \gamma \right | ^{n+1}$ for all $n\in \mathbb{N}\cup \left \{ 0 \right \}.$ This process is similar to the case where $b_{0}\ne 0$ ($b_{0}$ is the first nonzero term of the coefficients $\left \{ b_{n} \right \}_{n=0}^{\infty}  $ ). Therefore, we will omit its proof details here. In this case, we further prove that \eqref{E3} must be a transcendental meromorphic solution to \eqref{E19}. Suppose, on the contrary, that $g(z)$ is a rational function. From \eqref{E3} and \eqref{E23}, we see that \begin{align}\label{E102} 
g(z)=\frac{-1}{z}+\frac{1}{3}BDz+\cdots   
\end{align}
Assume that $R(z)$ is an irreducible rational function in $z.$ We define the degree of $R(z)$ by $$\deg_zR(z)=\max \left \{ \deg_zm(z),\deg_zn(z)   \right \},\,\,\,R(z)=\frac{m(z)}{n(z)}.$$ 
Let $\deg_zg(z)=N$  $(N\geq 2 ).$
Substituting \eqref{E102} into \eqref{E19}, and then by comparing the degrees of the two sides of \eqref{E19}, we  obtain a contradiction. Therefore, $g(z)$ must be a transcendental meromorphic function.
If $D=0,$ then $b_{0}=b_{1}=0.$ By \eqref{E16}, we see that $b_{n}=0$ for all $n\in \mathbb{N}\cup \left \{ 0 \right \}.$ Then by \eqref{E3} and Proposition \ref{P2},
we have $g(z)=-\frac{1}{z}$ is a global meromorphic solution to \eqref{E19}.

(b) We now consider the following formal power series:
\begin{align}
\label{E34} g(z)=b_{-1}(z-z_{0})^{-1}+ \sum_{n=0}^{\infty}b_{n}(z-z_{0})^{n},\quad z_{0}\ne 0. 
\end{align}
Then \begin{align}\label{E35}  
&{g}'(z)=-b_{-1}(z-z_{0})^{-2}+\sum_{n=1}^{\infty}nb_{n}(z-z_{0})^{n-1},\\\nonumber
&g(z)^{2}=b_{-1}^{2}(z-z_{0})^{-2}+\sum_{n=0}^{\infty}2b_{-1}b_{n}(z-z_{0})^{n-1}+\sum_{n=0}^{\infty}\left (\sum_{i,j=0}^{i+j=n}b_{i}b_{j}\right )(z-z_{0})^{n}, \\\nonumber  
&g(qz)=b_{-1}(qz-z_{0})^{-1}+\sum_{n=0}^{\infty}b_{n}(qz-z_{0})^{n}       
\end{align}
Substituting \eqref{E34} and  \eqref{E35} into \eqref{E19}, and using the binomial theorem $$(r+s)^{n}=\sum_{k=0}^{n}C_{n}^{k}r^{n-k}s^{k}, \quad C_{n}^{k}=\frac{n!}{k!(n-k)!},$$
we have 
\begin{align}\label{E41} 
b_{-1}= -1, (n+3)b_{n+1}= Ab_{n}q^{n}+\sum_{i,j= 0}^{i+j = n}b_{i}b_{j}+Cb_{n},\quad n\geq 1. 
\end{align}
We can see that $b_{0}$ depends on $z_{0}.$
In order to discuss the value of $b_{0},$ we separate three cases:

$\mathbf{Case}$ 
$\mathbf{A}:$
Assume that $b_{n}\equiv 0$ for all $n\geq 0.$ Then, it follows from \eqref{E34} that
$g(z)=\frac{-1}{z-z_{0}}$ is a local meromorphic solution 
to \eqref{E19} in a neighborhood of $z=z_{0}.$

$\mathbf{Case}$  $\mathbf{B}:$
If $b_{0}\ne 0,$ then we can refer to Step $2.2$ and Step $2.3$ to prove that $$R=\frac{1}{\limsup_{n \to \infty}\sqrt[n]{\left |b_{n}\right|}} >0.$$
That is $\sum_{n=0}^{\infty}b_{n}(z-z_{0})^{n},$ $z_{0}\ne 0,$ is a locally analytic function. In other words, \eqref{E34} is a locally meromorphic function of \eqref{E19}.

$\mathbf{Case}$  $\mathbf{C}:$
Suppose there exists a $m$ ($m\geq 1$) such that $b_{m}\ne 0$ and all $b_{n}\equiv 0,$ $n\leq m-1.$ Then \eqref{E34} reduces to $$g(z)=\frac{-1}{z-z_{0}}+\sum_{n=m}^{\infty}b_{n}(z-z_{0})^{n},\quad z_{0}\ne 0.$$  
Let $b_{m}=\gamma.$ We discuss the two cases in which $\left | \gamma  \right |$ satisfies $0<\left | \gamma \right |<1$ and $\left | \gamma \right |>1,$ respectively.

Let $\left | \gamma  \right |> 1.$ Then $ \left | b_{m} \right |=\left | \gamma \right |  < \left | \gamma  \right |^{m+1}\left ( \left | A \right |+\left | C \right |+\left | \gamma  \right |    \right )^{m} .$ Suppose that \begin{align}\label{E53}
\left | b_{n} \right |< \left | \gamma  \right |^{n+1}(\left | A \right |+\left | C \right |+\left | \gamma \right |)^{n}, \quad \forall n\leq k, \,\,\, k> m.  
\end{align}
Combining \eqref{E41}, \eqref{E53}, and $0<\left | q \right |<1.$ Then we see that
\begin{align}
\left | b_{k+1} \right |
&=\frac{1}{k+3}\left [ Ab_{k}q^{k}+\sum_{i,j=0}^{i+j=k}b_{i}b_{j}+Cb_{k}  \right ]  \\\nonumber
&\leq \frac{1}{k+3}\left [ \left | A \right |\left | b_{k} \right |+\sum_{i,j=0}^{i+j=k}\left | b_{i}b_{j} \right |+\left | C \right |\left | b_{k} \right | \right ]                                               \\\nonumber
&\leq \frac{1}{k+3}\left | \gamma  \right |^{k+1}(\left | A \right |+\left | C \right |+\left | \gamma  \right |  )^{k}\left [ \left | A \right |+(k+1)\left | \gamma  \right |+\left | C \right | \right ]\\\nonumber    
&\leq \left | \gamma  \right |^{k+1}(\left | A \right |+\left | C \right |+\left | \gamma\right | )^{k}\left [ \left | A \right |+\left | C \right |+\left | \gamma  \right |  \right ]\\\nonumber 
&< \left | \gamma  \right |^{k+2}(\left | A \right |+\left | C \right |+\left | \gamma\right | )^{k+1}.
\end{align} 
Therefore, by mathematical induction, we have \begin{align*} \left | b_{n} \right |< \left | \gamma  \right |^{n+1}(\left | A \right |+\left | C \right |+\left | \gamma \right |)^{n}, \quad \forall n\in \mathbb{N}\cup \left \{ 0 \right \}.  
\end{align*}

Let $0<\left | \gamma  \right |\leq 1.$ 
Then $\left | \gamma  \right | =\left | b_{m} \right |\leq (\left | A \right |+\left | C \right |+1 )^{m}.$ Suppose that \begin{align}\label{E54}
\left | b_{n} \right |\leq (\left | A \right |+\left | C \right |+1 )^{n},\quad \forall n\leq k, \,\,\,k>m.
\end{align}
Together, \eqref{E41}, \eqref{E54}, and $0<\left | q \right |<1 $ yield \begin{align*}
b_{k+1}&=\frac{1}{k+3}\left [ Ab_{k}q^{k}+\sum_{i,j=0}^{i+j=k} b_{i}b_{j} + Cb_{k} \right ]\\
 &\leq \frac{1}{k+3}\left [ \left | Ab_{k} \right |+\sum_{i,j=0}^{i+j=k}\left | b_{i}b_{j} \right |+\left | cb_{k} \right |     \right ] \\
&\leq \left ( \left | A \right |+\left | C \right |+1   \right )^{k}\left [ \frac{\left | A \right | }{k+3}+\frac{k+1}{k+3}+\frac{\left | C \right | }{k+3}  \right ] \\
&\leq \left ( \left | A \right |+\left | C \right |+1   \right )^{k+1}.
\end{align*}
By mathematical induction, we conclude that 
\begin{align*}
\left | b_{n} \right |\leq  (\left | A \right |+\left | C \right |+1 )^{n},\quad \forall n\in \mathbb{N}\cup\left \{ 0 \right \}.
\end{align*}
Referring to Steps $2.2$ and $2.3$, we can always obtain that the radius of convergence $R>0$ of $\sum_{n=m}^{\infty}b_{n}(z-z_{0})^{n}$ ($z_{0}\ne 0$). That is, $\sum_{n=m}^{\infty}b_{n}(z-z_{0})^{n}$ is  a locally analytic function. In other words, $$g(z)=\frac{-1}{z-z_{0}}+\sum_{n=m}^{\infty}b_{n}(z-z_{0})^{n},\quad z_{0}\ne 0,$$ is a local meromorphic solution of \eqref{E19}, regardless of whether $0<\left | \gamma  \right |\leq 1 $ or $\left | \gamma  \right |> 1.$

(v) Let $f(z)$ be a meromorphic solution of \eqref{E1}, and let $f(z_{0})=\infty^{t}$ with $t\geq 2.$
By the beginning of (iv), we see that $f\left ( q^{n}z_{0} \right )=\infty^{2^{n}t}$ $(z_{0}\ne 0).$ Let $r=\left | q^{n}z_{0} \right | .$ Then, by the definition of exponent of convergence  for the poles of $f(z),$ we see that 
$$\lambda \left ( \frac{1}{f}  \right )=\limsup_{r \to \infty}\frac{\log n(r,f)}{\log r}\geq \limsup_{n \to \infty}\frac{\log (2^{n}t)}{\log \left | q^{n}z_{0} \right | }=\frac{\log 2}{\log q}.$$
This leads to a contradiction with the fact that the order of $f(z)$ is zero. That is, all poles of $f(z)$ are simple. Since $f(z)=\frac{g(z)}{B} $, all poles of $g(z)$ are also simple.
Let \begin{align}\label{E43}  
g(z)=\frac{c_{-1}}{z-z_{0}}+\sum_{n=0}^{\infty}c_{n}(z-z_{0})^{n}=\frac{c_{-1}}{z-z_{0}}+m(z), \quad z_{0}\in \mathbb{C},  
\end{align}
where $\sum_{n=0}^{\infty}c_{n}(z-z_{0})^{n}=m(z),$
$c_{-1}\ne 0.$
Then 
\begin{align}\label{E44}  
&{g}'(z)=-c_{-1}(z-z_{0})^{-2}+\sum_{n=1}^{\infty}nc_{n}(z-z_{0})^{n-1},\\\nonumber
&g(z)^{2}=c_{-1}^{2}(z-z_{0})^{-2}+\sum_{n=0}^{\infty}2c_{-1}c_{n}(z-z_{0})^{n-1}+\sum_{n=0}^{\infty}\left (\sum_{i,j=0}^{i+j=n}c_{i}c_{j}\right )(z-z_{0})^{n}, \\\nonumber  
&g(qz)=c_{-1}(qz-z_{0})^{-1}+\sum_{n=0}^{\infty}c_{n}(qz-z_{0})^{n}.       
\end{align}
Substituting \eqref{E43} and \eqref{E44} into \eqref{E19}, by the similar way as \eqref{E35}, we have
\begin{align}\label{E46}
&c_{-1}=-1,\\\nonumber
&c_{n+1}= \frac{1}{n+3}\left [Ac_{n}q^{n}+\sum_{i,j = 0}^{i+j= n}c_{i}c_{j}+Cc_{n} \right ],\quad n\geq 1,  
\end{align}
where $c_{0}$ depends on $z_{0}.$ Note that $c_0$ and $c_1$ are both real numbers. From \eqref{E46}, we have $c_2,c_3,\cdots$ are all real numbers.
Suppose there exists a point $z_{0}\in \mathbb{R}$ such that $c_{0}=m(z_{0})=\sum_{n=0}^{\infty}c_{n}(z_{0}-z_{0})^{n} >1.$ Then $ c_0 \geq  c_0.$ Suppose now that $c_n \geq c_0   $ for all $n\leq k.$ 
By \eqref{E46}, we have
\begin{align}\label{E47}
c_{k+1}&= \frac{1}{k+3}\left [Ac_{k}q^{k}+\sum_{i,j = 0}^{i+j= k}c_{i}c_{j}+Cc_{k} \right ] \\\nonumber 
       &\geq \frac{1}{k+3}\left [Ac_{0}q^{k}+c_{0}^{2}(k+1)+Cc_{0}  \right ]\\\nonumber 
        &=c_{0}\left [ \frac{A}{k+1}q^{k}+c_{0}+\frac{C}{k+1}\right]\\\nonumber 
&>c_{0}^{2}\\\nonumber
        &>c_{0}.
\end{align}
Hence, by mathematical induction, we have $c_{n}\geq c_{0}>1$ for all $n\in \mathbb{N}\cup \left \{ 0 \right \}.$ This means $\lim_{n \to \infty} c_{n} \geq c_{0}> 1>0,$ it follows that $m(z)=\sum_{n=0}^{\infty}c_{n}(z-z_{0})^{n}$ is divergent in $z=z_{0}+1.$ By Abel’s theorem, $m(z)$ diverges for all $\left | z-z_0 \right |>1 .$
That is, \eqref{E43} cannot be a global meromorphic solution to \eqref{E19}, which contradicts with our assumption. Then we have  $m(z_{0})=\sum_{n=0}^{\infty}c_{n}(z_{0}-z_{0})=c_{0} \leq 1$ for all $z_{0}\in \mathbb{C}.$ Combining Lemma~\ref{L3}, it follows that $m(z)$ must be a constant. That is, $g(z)$ is a rational function.

\begin{remark} In Case $A,$ $B,$ and $C$ of (iv), we can prove that the radius of convergence $R>0$ of $g(z).$ However, it is possible that $R=+\infty.$ In this case, a local meromorphic solution in a neighborhood of $z=z_{0}$ to \eqref{E19} can be replaced by a global meromorphic solution to \eqref{E19}. 
\end{remark}
\end{proof}

\section{proof of theorem \ref{T2}}
\begin{proof}
(i) Under the assumption, \eqref{E1} reduces to 
\begin{align}\label{E13}
{f}'(z)=Af(qz)+Cf(z)+D. 
\end{align}
Consider a formal series
\begin{align}\label{E12}
f(z)=\sum_{n=0}^{\infty}a_{n}z^{n}. 
\end{align}
Substituting \eqref{E12} into \eqref{E13}, it follows that

\begin{align}\label{E26}
&a_{1}=Aa_{0}+Ca_{0}+D,\\\nonumber
&2a_{2}=Aa_{1}q+Ca_{1},\\\nonumber
&3a_{3}=Aa_{2}q^{2}+Ca_{2},\\\nonumber
&\cdots \\\nonumber
&(n+1)a_{n+1}=Aa_{n}q^{n}+Ca_{n},\,\,\,n\geq 1.       
 \end{align}
Therefore, we  have the following:
\begin{align}\label{E14}
 \frac{1}{R} =\lim_{n \to \infty}\left|\frac{a_{n+1}}{a_{n}}\right|=\lim_{n \to \infty}\left|\frac{Aq^{n}+C}{n+1}\right|,     
\end{align}  
where $R$ denotes the radius of convergence of $f(z).$ It follows that if $0<\left | q \right |<1,$ then $R=\infty;$ that is to say, \eqref{E12} is an entire  solution of \eqref{E13}. 
From \eqref{E26} we see that $a_n$ forms a one parameter family, determined by  $a_0,$ for all $n\in \mathbb{N}.$
From Step $1$ of the proof of Theorem \ref{T1}, we obtain 
there exists uncountably many $a_0\in \mathbb{C}$ such that all the coefficients of \eqref{E12} are nonzero. Consequently, \eqref{E13} has uncountably many transcendental entire solutions. However, if $\left | q \right |>1, $ $R=0;$ that is to say, \eqref{E12} converges only at the origin.

(ii) We first consider the case where $0<\left | q \right |<1.$
Suppose, on the contrary, that $f(z)$ is a meromorphic solution to \eqref{E13}.
Let $f(z_{0})=\infty^{t}.$ If $z_{0}=0,$ then 
\begin{align}\label{E27} {f}'(0) & = Af(0)+Cf(0)+D. 
\end{align}
By comparing the orders of both sides of \eqref{E27}, we can obtain a contradiction. Hence, $z_{0}\ne 0.$ Then $f(qz_{0})=\infty ^{t+1}.$ Replacing $z$ with $qz$ in \eqref{E13}, it follows that \begin{align}\label{E28}
{f}'(qz) = Af(q^{2}z)+Cf(qz)+D. 
\end{align}
Then $f(q^{2}z_{0})=\infty^{t+2}.$ By repeating these steps, it follows that $f(q^{n}z_{0})=\infty^{t+n}.$ Note that $0<\left|q\right|<1.$ Then $q^{n}z_{0}\longrightarrow 0 $ as $n\longrightarrow\infty,$ which means the origin is the limit point of poles of $f(z).$
This contradicts the fact that $f(z)$ is a meromorphic function. 
Therefore, our assumption does not holds. That is, \eqref{E13} has no meromorphic solutions when $0<\left | q \right |<1.$
\end{proof}    
\section*{Declarations}

\subsection*{Conflict of interest}
The authors have no conflicts of interest to declare that are relevant to the content of this article.

\bibliographystyle{siam}
\bibliography{name}

\end{document}